\newcommand{\rar}{\rightarrow}
\newcommand{\lar}{\longrightarrow}
\newcommand{\llar}{-\kern-5pt-\kern-5pt\longrightarrow}
\newcommand{\lllar}{-\kern-5pt-\kern-5pt\llar}
\long\def\symbolfootnote[#1]#2{\begingroup%
\def\thefootnote{\fnsymbol{footnote}}\footnote[#1]{#2}\endgroup}
\newtheorem{Theorem}{Theorem}[section]
\newtheorem{Corollary}[Theorem]{Corollary}
\newtheorem{Proposition}[Theorem]{Proposition}
\newtheorem{Remark}[Theorem]{Remark}
\newtheorem{Example}[Theorem]{Example}
\def\depth{\mbox{\rm depth }}
\def\gr{\mbox{\rm gr}}
\def\reg{\mbox{\rm reg}}
\def\red{\mbox{\rm red}}
\def\e{\mathrm{e}}
\def\m{\mathfrak{m}}
\def\AA{{\mathbf A}}
\def\BB{{\mathbf B}}
\def\DD{{\mathbf D}}
\def\RR{{\mathbf R}}
\def\PP{{\mathbf P}}
\def\g2{{\mathbf g}}
\def\H{{\mathrm H}}
\def\m{{\mathfrak m}}
\begin{document}

\title{Ideals generated by quadrics }

\thanks{AMS 2010 {\em Mathematics Subject Classification}. 
Primary 13A30; Secondary 13F20, 13H10, 13H15. \\
{\em Key Words and Phrases.} Rees algebra, associated graded ring, Hilbert coefficients, Castelnuovo regularity, relation type.\\
The second author is partially supported by a CNPQ Research Fellowship (Brazil) and a CAPES Senior Visiting Reseach Scholarship (Brazil).}

\author{Jooyoun Hong}
\address{ Department of Mathematics \\
Southern Connecticut State University \\
501 Crescent Street, New Haven, CT 06515-1533, U.S.A.}
\email{hongj2@southernct.edu}

\author{Aron Simis}
\address{Departamento de Matem\'atica \\
Universidade Federal de Pernambuco \\
50740-540 Recife, PE, Brazil }
\email{aron@dmat.ufpe.br}

\author{Wolmer V. Vasconcelos}
\address{Department of Mathematics \\ Rutgers University \\
110 Frelinghuysen Rd, Piscataway, NJ 08854-8019, U.S.A.}
\email{vasconce@math.rutgers.edu}


\begin{abstract}
\noindent
Our purpose is to study the cohomological properties of  the Rees algebras of a class of ideals generated by 
quadrics. For {\em all} such ideals $I\subset \RR = K[x,y,z]$ we give the precise value of $\depth  \RR[It]$ 
and decide whether the corresponding rational maps are birational. In the case of dimension $d \geq 3$, when $K=\mathbb{R}$, we give structure theorems for 
all ideals of codimension $d$ minimally generated by ${{d+1}\choose{2}}-1$ quadrics. 
For arbitrary fields $K$, we prove a polarized version.

\end{abstract}

\maketitle

\section{Introduction}

\noindent
Let $K$ be a field, $\RR=K[x_1, \ldots, x_d]$ and $\m=(x_1, \ldots, x_{d})$. We say that an ideal $I$ of $R$ is {\em submaximally generated} if it has codimension $d$ and is minimally generated by $\nu(\m^{2})-1$ quadrics.
Even with these restrictions, 
the cohomological properties of the Rees algebras $\RR[It]$, wrapped around the calculation of their depths, can
be daunting. In two circumstances we give a full picture of these algebras.

\medskip

The first result--Theorem~\ref{5quadrics}--asserts,  for all ideals $I$ of codimension $3$ minimally generated by $5$ quadrics in $K[x,y,z]$,  an interesting uniformity:  the rational mapping defined by $I$ is birational, $\RR[It]$ satisfies the condition $ R_1$ of 
 Serre and $\depth \RR[It] = 1$ or $3$ depending on whether $ I$ is Gorenstein or not.

 We recall that a set of forms $f_1,\ldots,f_m\in \RR$ of fixed degree $n$ defines a {\em birational} map if the field extension
 $$K(f_1,\ldots,f_m) \subset K((x_1, \ldots, x_d)_n)$$
 is an equality, where the right most field is the Veronese field of order $n$.
 A basic result (\cite[Proposition 3.3]{syl2})  states that, for an $\m$-primary ideal $I\subset \RR$, birationality is equivalent to $\RR[It]$ having the condition $ R_1$ of Serre.

\medskip

In all dimensions $d$, if $K = \mathbb{R}$,  two structure results--Theorem~\ref{abc} and Theorem~\ref{abc2}--describe first all the Gorenstein
ideals of codimension $d$ minimally generated by $\nu(\m^{2})-1$ quadrics 
and then how these ideals are assembled together
to describe all others. 
 The corresponding vector spaces of quadrics
are always spanned by binomials and are distributed into finitely many orbits of the natural action of the linear group.
In all characteristics, Theorem~\ref{abc3} gives a polarized version of Theorem~\ref{abc}, with similar consequences.   

\section{Ideals generated by 5 quadrics}

Throughout this section $\RR$ is the ring of polynomial $K[x,y,z]$ over the field $K$. In a few occasions
when dealing with Gorenstein ideals we
assume that char $K\neq 2$. Our purpose is to prove the following assertion
about all ideals of codimension $3$ generated by $5$ quadrics.

\begin{Theorem} \label{5quadrics}
Let $\RR = K[x,y,z]$, 
 and $I$ an ideal of codimension $3$ minimally generated by $5$ quadrics.
\begin{itemize}
\item[{\rm (i)}] If $\mbox{\rm char}\, K \neq 2$ and $I$ is Gorenstein, then $I$ is syzygetic. Conversely, in all characterics,  if $I$ is syzygetic, then $I$ is Gorenstein.
 \item[{\rm (ii)}] If $I$ is syzygetic, then
  $\depth   \gr_{I}(\RR) = 0$.
\item[{\rm (iii)}] If $I$ is not Gorenstein, then
 $\depth   \gr_{I}(\RR) = 2$.
\item[{\rm (iv)}] 
$\RR[It]$ has the condition $R_1$ of Serre, i.e., the set of quadrics
generating $I$ is birational.
\item[{\rm (v)}]  $\red (I)= 2$.
\end{itemize}
\end{Theorem}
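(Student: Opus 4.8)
The plan is to reduce the whole theorem to a single invariant of $I$---the rank of a dual quadratic form---and to read off parts (i)--(v) from the two resulting Hilbert series of $\gr_I(\RR)$. Since $I$ is generated by a $5$-dimensional subspace $W\subseteq\RR_2$ and $\dim_K\RR_2=6$, I would write $W=Q^\perp$ for a nonzero quadric $Q$ in the dual variables, so that $I=(W)$. A short computation shows that the common zero locus of $W$ in $\PP^2$ is empty---equivalently $\codim I=3$---exactly when $\mathrm{rank}\,Q\ge 2$; hence, after extending scalars (which does not change Hilbert functions), only $\mathrm{rank}\,Q\in\{2,3\}$ occur, and when $\mbox{\rm char}\,K\neq 2$ these are the two relevant $\mathrm{GL}_3$-orbits. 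Using the monomial models $Q=XY$ (rank $2$) and $Q=X^2+Y^2+Z^2$ (rank $3$) I would compute $W^2,W^3,\dots$ directly; the only external inputs are the Buchsbaum--Eisenbud structure of Gorenstein codimension-$3$ ideals and \cite[Proposition 3.3]{syl2}.

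For (i) I use that $I$ is syzygetic iff the kernel $\delta(I)$ of $\Sym_2(I)\to I^2$ vanishes, and---because no linear syzygy contributes in degree $4$---that $\delta(I)_4=0$ is just $\dim_K W^2=15$, i.e.\ $W^2=\RR_4$ and so $I^2=\m^4$. On the other hand $\RR/I$ is Gorenstein iff its socle is $1$-dimensional; I would compute the degree-$1$ socle to be $\mathrm{rad}(Q)$, so Gorenstein is equivalent to $\mathrm{rank}\,Q=3$. Both properties thus cut out the rank-$3$ orbit: in characteristic $\neq 2$ a rank-$3$ form exists and a finite monomial check on the traceless conics gives $W^2=\RR_4$ (Gorenstein $\Rightarrow$ syzygetic), while $W^2=\RR_4$ forces $\mathrm{rad}(Q)=0$ in every characteristic (syzygetic $\Rightarrow$ Gorenstein). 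Part (ii) is then immediate: syzygetic gives $I^2=\m^4$, so every quadric $g$ satisfies $gI\subseteq\m^4=I^2$; thus $\m^2\subseteq (I^2:I)$ lies in the Ratliff--Rush closure $\widetilde{I}$, and since $I\subsetneq\m^2$ we get $\widetilde{I}\neq I$, whence $\depth\gr_I(\RR)=0$.

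For the remaining parts I pass to the rank-$2$ orbit for (iii) and handle (iv),(v) over both. The monomial computation yields $\length(I^n/I^{n+1})=4n^2+8n+5$ for all $n\ge 0$, i.e.\ Hilbert series $(5+2t+t^2)/(1-t)^3$ with $e=(8,4,1,0)$ (the rank-$3$ series being $(5+6t^2-4t^3+t^4)/(1-t)^3$, with $e=(8,4,0,0)$). For (iv) I realize the map given by $W$ as the second Veronese of $\PP^2$ followed by projection of the surface $V_2\subseteq\PP^5$ from a single point $p$; base-point-freeness (i.e.\ $\m$-primaryness) means $p\notin V_2$, and projection of the Veronese from an external point is birational onto a quartic surface in $\PP^4$, so \cite[Proposition 3.3]{syl2} turns birationality into the condition $R_1$ for $\RR[It]$. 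For (v), the lower bound $\red(I)\ge 2$ holds in both orbits because $e_1=4>3=e_0-\length(\RR/I)$ rules out $\red=1$ (Northcott); the upper bound is $\red\le\deg(5+2t+t^2)=2$ in the rank-$2$ case (the degree of the $h$-polynomial computes $\red$ once $\depth\gr_I(\RR)\ge\dim\RR-1$), and in the Gorenstein case follows directly from $I^2=\m^4\subseteq J$, since the minimal reduction $J$ of three general quadrics is a complete intersection with $(\RR/J)_{\ge 4}=0$, giving $I^3=JI^2=\m^6$.

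The main obstacle is the exact value in (iii), $\depth\gr_I(\RR)=2$ for the non-Gorenstein ideals. The lower bound $\ge 2$ I expect from the Valabrega--Valla criterion, verifying $(a_1,a_2)\cap I^{n+1}=(a_1,a_2)I^{n}$ for two general quadrics $a_1,a_2\in I$; the delicate point is ruling out Cohen--Macaulayness even though the $h$-vector $(5,2,1)$ is nonnegative. For this I would either produce a nonzero local-cohomology class accounting for the ``late'' coefficient $h_2=1$, or argue that a Cohen--Macaulay $\gr_I(\RR)$ would force $I$ to be syzygetic and hence, by (i), Gorenstein---contrary to the orbit. A secondary subtlety is characteristic $2$, where rank-$3$ forms degenerate and the apolarity model must be replaced by the polarized description underlying Theorem~\ref{abc3}, matching the all-characteristic scope of the converse in (i).
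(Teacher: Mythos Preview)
Your overall strategy---classifying $I$ by the rank of the inverse-system quadric $Q$ and reading everything off the two orbits---is a genuinely different organization than the paper's. The paper does not use apolarity at all: it argues directly from the socle that a non-Gorenstein $I$ contains $x\m$ for some linear $x$ and then normalizes the remaining two conics by hand (Proposition~\ref{Prop2.2}); for $R_1$ it computes $\e_1(I)=4$ via the Sally module and invokes $\e_1(I)=\e_1(\m^2)$, rather than your pleasant geometric argument (projection of the Veronese from an external point).

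There is, however, a real gap in your treatment of (i). From $W^2=\RR_4$ you only get $\delta(I)_4=0$, i.e.\ $\nu(I^2)=15$; this is strictly weaker than $\delta(I)=0$. The paper itself warns that the implication ``$\nu(I^2)=\binom{\nu(I)+1}{2}\Rightarrow I$ syzygetic'' fails in general, and for the forward direction Gorenstein $\Rightarrow$ syzygetic it does not compute anything: it quotes \cite[Proposition~2.8]{HSV83}, whose proof uses the skew-symmetry of the Buchsbaum--Eisenbud matrix (and genuinely needs $\tfrac12\in K$). Your ``monomial check on the traceless conics'' establishes $I^2=\m^4$ but not syzygeticity; you must either invoke \cite{HSV83} or supply a separate argument that $\delta(I)_n=0$ for $n\ge 5$.

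Your acknowledged obstacle in (iii) is also where the proposal is weakest. The suggestion ``Cohen--Macaulay $\gr_I(\RR)$ would force $I$ syzygetic'' is not correct (and would in any case loop back through the gap above). The paper's route is short and independent of (i): once $R_1$ is known, $\depth\gr_I(\RR)=3$ would make $\RR[It]$ Cohen--Macaulay, hence normal by Serre's criterion, hence equal to its integral closure $\RR[\m^2t]$, contradicting $I\neq\m^2$. For the lower bound the paper avoids Valabrega--Valla entirely: it exhibits explicit minimal reductions $J$ in each orbit, checks by hand that $\lambda(I^2/JI)=1$, and applies \cite[Theorem~4.7]{HucMar97}. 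Finally, note that your argument for $\red(I)\le 2$ in the rank-$2$ case presupposes $\depth\gr_I(\RR)\ge 2$, so (v) is entangled with (iii); the paper instead reads $\red(I)=2$ directly from the explicit reductions $(x^2,y^2,z^2)$ and $(x^2,yz,y^2+z^2)$.
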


\medskip

Let $I$ be an ideal of codimension $3$ generated by quadrics of $\RR=K[x,y,z]$,
 $\m=(x,y,z)$.  As a general reference we shall use \cite{BH} and unexplained terminology
will be defined when needed.

\medskip

We begin with some observations about the two Hilbert functions associated to $I$, i.e., the Hilbert function of $\RR/I$ and that of  the associated graded ring $\gr_{I}(\RR) = \sum_{i\geq 0}I^i/I^{i+1}$ of the ideal $I$. 
In order to determine the Hilbert function of $\RR/I$, we may extend the field $K$ to assume that $K$ is infinite. 
 Let $J$ be a minimal reduction of $I$ generated by $3$ quadrics. The Hilbert function of $\RR/J$
is $(1,3,3,1)$, with the socle in degree $3$. If $I$ is minimally generated by $5$ quadrics, 
 $\lambda(I/J)\geq 2$, that is $5\leq \lambda(\RR/I) \leq 6$. 
  The possible Hilbert functions of $\RR/I$ are $(1,3,1)$ or $(1,3,1,1)$. The second cannot occur since
the socle of $\RR/J$ will map to $0$ in $\RR/I$. Hence $\lambda(\RR/I)=5$.   
If $I$ is minimally generated by $4$ quadrics, a similar analysis shows that the Hilbert
function of $\RR/I$ is $(1,3,2)$. Hence $\lambda(\RR/I)=6$.

\medskip

The other Hilbert function is that of $\gr_{I}(\RR) = \sum_{i\geq 0}I^i/I^{i+1}$ of the ideal $I$. Its two leading coefficients $\e_0(I)$ and $\e_1(I)$ are known to some extent: since
the integral closure of $I$ is $\m^2$,  $\e_0(I) = \e_0(\m^2)=8$ and $\e_1(I)\leq \e_1(\m^2)=4$. 

\medskip

More generally, suppose that $\RR = K[x_1, \ldots, x_d]$ with $\m = (x_1, \ldots, x_d)$ and 
 that   $I$ is a  Gorenstein ideal of codimension $d$ minimally generated by
$\nu(\m^{2})-1$ quadrics. Then the Hilbert function of $\RR/I$ is simply $(1, d, 1)$, because once a 
component has dimension $1$ those that follow also have dimension $1$ or $0$. The symmetry in the case of 
Gorenstein  algebras show that the Hilbert function has to be of the type indicated.

\medskip

The proof of Theorem~\ref{5quadrics} is the assembly  of Propositions \ref{Prop2.1}, \ref{Prop2.2} and \ref{Prop2.3}.
We recall the notion of syzygetic ideals. Given an ideal $I$, there is a canonical mapping from its symmetric square $S_{2}$ onto $I^{2}$:
\[ 0 \rar \delta(I) \rar S_{2} \rar I^{2} \rar 0.\]
An ideal $I$ is called {\em syzygetic} if $\delta(I)=0$. If $I$ is syzygetic, then the minimal number of generators of $I^2$,
 $\nu(I^{2})$, is given by the formula $  {{\nu(I)+1}\choose{2}}  $. The converse however is not true. Another elementary property of these ideals is that $I^2\neq JI$ for any proper subdeal $J\subset I$.  
 
 \medskip
 
We recall that if $1/2\in K$ and $I$ is  Gorenstein of codimension $3$, then $I$ syzygetic
(\cite[Proposition 2.8]{HSV83}). 

\begin{Proposition}\label{Prop2.1}
Let $\RR = K[x,y,z]$, and $I$ an ideal of codimension $3$ generated by $5$ quadrics. If $I$ is
syzygetic, then
\begin{itemize}
\item[{\rm (i)}] $\RR[It]$ has the condition $R_1$ of Serre;
\item[{\rm (ii)}] $\depth   \gr_{I}(\RR) = 0$; 
\item[{\rm (iii)}] $\red (I)= 2$.
\end{itemize}
\end{Proposition}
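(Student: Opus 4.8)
The engine behind all three parts is the observation that syzygeticity pins down $I^2$ exactly. Since $I$ is minimally generated by $5$ quadrics, the formula quoted above gives $\nu(I^2)=\binom{6}{2}=15$. As $I^2$ is generated in degree $4$ and its lowest-degree generators are $K$-linearly independent, $\dim_K(I^2)_4=15=\dim_K\RR_4$, so $(I^2)_4=\RR_4$; being a homogeneous ideal contained in $\m^4=\RR_{\ge 4}$, this forces $I^2=\m^4$. Combining this with $\m^3\subseteq I\subseteq\m^2$ (read off from the Hilbert function $(1,3,1)$ of $\RR/I$) and iterating yields $I^n=\m^{2n}$ for all $n\ge 2$. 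The plan is to extract (ii) and (iii) from this description; the crux is precisely establishing $I^2=\m^4$, after which both are bookkeeping.

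For (ii) I would produce a nonzero socle element of $\gr_I(\RR)$ for its homogeneous maximal ideal $\mathcal M=(\m/I)\oplus\gr_I(\RR)_+$. Choose a quadric $q_0\in\RR_2\setminus I_2$ and let $\bar q_0$ be its class in $\gr_I(\RR)_0=\RR/I$, which is nonzero because $(\RR/I)_2\ne 0$. Then $(\m/I)\bar q_0=0$ since $\m q_0\subseteq\RR_3\subseteq\m^3\subseteq I$, and for each $k\ge 1$ the product into $\gr_I(\RR)_k=I^k/I^{k+1}$ vanishes because $q_0 I^k\subseteq\m^2\cdot\m^{2k}=\m^{2k+2}=I^{k+1}$ (valid already at $k=1$, where $q_0 I\subseteq\m^4=I^2$). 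Hence $\mathcal M\bar q_0=0$ and $\depth\gr_I(\RR)=0$.

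For (iii), the bound $\red(I)\ge 2$ is immediate: syzygeticity gives $I^2\ne JI$ for every proper subideal $J\subsetneq I$, in particular for a minimal reduction generated by $3$ quadrics. For the reverse bound I would use that such a $J$ is a complete intersection of three quadrics, so $\RR/J$ has Hilbert series $(1+t)^3$; in particular $(\RR/J)_4=0$, i.e. $J_4=\RR_4$, and then $J_n=\RR_n$ for all $n\ge 4$. From $J_2\RR_2=J_4=\RR_4$ one gets $J_2\RR_4=\RR_6$, so $JI^2=J\m^4=\m^6$. Since also $I^3\subseteq\m^6$, the chain $\m^6=JI^2\subseteq I^3\subseteq\m^6$ gives $I^3=JI^2$, whence $\red(I)=2$.

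For (i), by \cite[Proposition 3.3]{syl2} it suffices to prove that the five quadrics define a birational map, and here I would argue geometrically and uniformly, not using syzygeticity at all (consistent with part (iv) of Theorem~\ref{5quadrics} holding for all such $I$). Because $I$ has codimension $3$, the quadrics have no common zero in $\PP^2$ and define a morphism $\phi\colon\PP^2\to\PP^4$; its image $Y$ is a surface (the analytic spread of $I$ is $3$) and is nondegenerate (the $q_i$ are linearly independent). The projection formula gives $\deg(\phi)\cdot\deg(Y)=(\phi^{*}\mathcal O(1))^2=(2H)^2=4$. Since a nondegenerate surface in $\PP^4$ has degree at least $\codim+1=3$ and $\deg(Y)$ divides $4$, necessarily $\deg(Y)=4$ and $\deg(\phi)=1$, so $\phi$ is birational onto its image and $\RR[It]$ satisfies $R_1$. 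The main obstacle is the structural identity $I^2=\m^4$ driving (ii) and (iii); the argument for (i) is independent, its only external input being the classical minimal-degree bound for surfaces in $\PP^4$, and one should note that the Hilbert-function and analytic-spread computations may require passing to an infinite extension of $K$, exactly as the excerpt already does.
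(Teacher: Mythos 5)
Your proposal is correct, and while its engine --- syzygeticity forces $\nu(I^2)=\binom{6}{2}=15$, hence $I^2=\m^4$ and then $I^n=\m^{2n}$ for $n\geq 2$ --- is exactly the paper's, the way you extract (i) and (ii) from it is genuinely different. The paper makes one further move that you skip: the embedding $0\rar \RR[It]\lar \RR[\m^2t]\lar C\rar 0$ has finite-length cokernel $C$, and since $\RR[\m^2t]$ is normal this simultaneously gives $R_1$ (the two algebras agree in codimension one) and $\depth \RR[It]=1$, whence $\depth \gr_I(\RR)=0$ by the proof of \cite[Proposition 1.1]{Hu0}. Your substitutes both work: for (ii), the explicit socle element $\bar q_0$ with $q_0\in\RR_2\setminus I_2$ (killed by $\m/I$ because $\m q_0\subseteq\m^3\subseteq I$, and by $\gr_I(\RR)_+$ because $q_0I^k\subseteq \m^{2k+2}=I^{k+1}$) is more elementary and self-contained, avoiding Huneke's comparison of the two depths; for (i), your degree count $\deg(\phi)\deg(Y)=4$ against the classical bound $\deg(Y)\geq 3$ for a nondegenerate irreducible surface in $\PP^4$, combined with \cite[Proposition 3.3]{syl2}, uses heavier geometric input but proves strictly more --- it never invokes syzygeticity, so it already yields Theorem~\ref{5quadrics}(iv) for \emph{all} ideals minimally generated by five quadrics, a fact the paper only obtains later (Proposition~\ref{Prop2.3}(i)) via $\e_1(I)=4$ and the multiplicity of the Sally module. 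In (iii) the two proofs share the lower bound ($I^2\neq JI$ by syzygeticity); for the upper bound the paper quotes that a minimal reduction $J$ of $I$ is also a minimal reduction of $\m^2$, an ideal of reduction number one, while you verify $J\m^4=\m^6$ directly from the Hilbert function $(1,3,3,1)$ of the complete intersection $\RR/J$ --- same content, but self-contained. Two harmless caveats: your argument for (i) needs the five quadrics to be linearly independent (nondegeneracy in $\PP^4$), i.e.\ minimal generation, which is implicit in the statement and is in any case forced by syzygeticity, since an $\m$-primary almost complete intersection is never syzygetic (\cite[Proposition 3.7]{syl2}, cf.\ Proposition~\ref{4quadrics}); and the geometric step should be carried out after base change to $\overline{K}$, with birationality and $R_1$ unaffected --- a point you essentially acknowledge.
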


\begin{proof} Let $\m$ be a maximal ideal containing $I$.
Since $I$ is syzygetic, its square is minimally generated by ${{6}\choose{2}}=15$ quartics and 
therefore we must have $I^2 = \m^4$. 
From the Hilbert function of $R/I$, we have $\m^{3}= \m I$. Hence
\[I^2= \m^{4}= \m \m^{3}= \m^2 I,\] and 
\[ \m^6 = \m^2 I^2 = I^3.\] Similarly we have $\m^{2n} = I^n$ for $n \geq 2$. [This implies, of course, that $I$
and $\m^2$ have the  same Hilbert polynomial.] 
Thus in the embedding of Rees algebras
\[ 0 \rar \RR[It] \lar \RR[\m^2t] \lar C \rar 0,\]
$C$ is a module of finite length.
Since $\RR[\m^2t]$ is normal, $\RR[It] $ has the condition $R_1$ and depth $1$. By the proof of \cite[Proposition 1.1]{Hu0}, we get $\depth   \gr_{I}(\RR) = 0$.

\medskip

The proof of (iii) is clear: If $J$ is a minimal reduction of $I$, it is also a minimal reduction of $\m^2$, an 
ideal of reduction number $1$,
\[ I^3= \m^6 = J\m^4 = JI^2.\]
Since $I^{2}\neq JI$, the reduction number of $I$ is $2$.
\end{proof}

Of course we don't know yet of other classes of syzygetic ideals. This is clarified in the next result.
Let us discuss non-Gorenstein ideals.

\begin{Proposition}\label{Prop2.2} 
Let $R=K[x,y,z]$ and $I$  an ideal of codimension $3$ minimally generated by $5$ quadrics. Suppose that $I$ is not Gorenstein. Then
$I$ is either $ ( x^2, xy, xz, y^2, z^2)$ or $ ( x^2, xy, xz, yz, ay^2 +cz^2)$ up to change of variables. In particular, $I$ is not syzygetic and $\red (I)= 2$.
\end{Proposition}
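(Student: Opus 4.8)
The plan is to read the structure of $I$ off the socle of $\RR/I$. Since $I$ is assumed not Gorenstein while the Hilbert function of $\RR/I$ is $(1,3,1)$, the socle of $\RR/I$ cannot be concentrated in top degree: besides the one-dimensional piece $(\RR/I)_2$ it must contain a nonzero class in degree $1$ (a socle class in degree $0$ would force $\m\subseteq I$, which is impossible as $\lambda(\RR/I)=5$). Thus there is a nonzero linear form $\ell$ with $\ell\,\m\subseteq I$, and after a change of variables I may take $\ell=x$, so that $x^2,xy,xz\in I_2$. Since the six-dimensional space of quadrics decomposes as $x\m\oplus\langle y^2,yz,z^2\rangle$, this gives $I_2=x\m\oplus W$ for a two-dimensional subspace $W\subseteq\langle y^2,yz,z^2\rangle$ of binary forms in $y,z$, and everything reduces to classifying $W$ up to the changes of variables in $y,z$ fixing $x$.

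Next I would translate the codimension hypothesis into a condition on the pencil $W$. Because $x^2,xy,xz$ already cut out the line $\{x=0\}$ in $\PP^2$, the ideal $I$ has codimension $3$ if and only if the two generators of $W$ have no common zero (equivalently their resultant is nonzero), i.e. $W$ is a base-point-free pencil of binary quadrics; in particular the degenerate pencils $\ell'\langle y,z\rangle$, which drop to codimension $2$, are excluded. I then classify base-point-free $W$. The dichotomy I expect is: either $W$ is spanned by two independent squares, which I move to $y^2,z^2$ to get $I=(x^2,xy,xz,y^2,z^2)$; or $W$ contains a quadric that is a product of two distinct $K$-rational linear forms, which I move to $yz$ and then reduce the complementary generator modulo $yz$ to the shape $ay^2+cz^2$, getting $I=(x^2,xy,xz,yz,ay^2+cz^2)$ with $a,c\neq0$ (base-point-freeness forcing $a,c\neq0$). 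When $\mbox{\rm char}\,K\neq2$ this is governed cleanly by the discriminant $\Delta(\lambda,\mu)=\mbox{\rm disc}_{y,z}(\lambda q_1+\mu q_2)$ of the pencil: base-point-freeness is exactly that $\Delta$ be squarefree, and its two roots are the two squares in $W\otimes\overline{K}$, which are either both rational (first form) or a conjugate pair (second form).

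The main obstacle is carrying out this classification over an arbitrary field, and above all in characteristic $2$. Over a non-closed field the two squares in $W\otimes\overline{K}$ may be conjugate rather than rational; this is precisely what produces the irreducible binomial $ay^2+cz^2$ and the parameters $a,c$, and here I would extract the rational split member as a suitable rational multiple of the difference of the two conjugate squares. In characteristic $2$ the conic of squares degenerates and the two normal forms become genuinely inequivalent: $(x^2,xy,xz,y^2,z^2)$ is then the inseparable pencil, containing no split member at all, so the discriminant argument must be replaced by a direct normalization—exhibiting a split factorization of the type $z(by+cz)$ in $W$ whenever one exists, and otherwise showing that $W$ is spanned by two squares. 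Getting these finitely many field-dependent cases to collapse into exactly the two stated orbits is where the real work lies.

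Finally I would dispatch the two concluding assertions by direct computation with the normal forms. For non-syzygeticity it is enough to observe that $y^3z\in\m^4$ but $y^3z\notin I^2$ in either form (in the second, among products of generators $y^3z$ occurs only in $yz\,(ay^2+cz^2)=ay^3z+cyz^3$, and $c\neq0$), so $I^2\neq\m^4$; by the argument of Proposition~\ref{Prop2.1} a syzygetic such ideal would satisfy $I^2=\m^4$, so $I$ is not syzygetic. For the reduction number I would take an explicit minimal reduction $J$—for instance $J=(x^2,y^2,z^2)$ in the first case and three suitable quadrics in the second—and verify $I^3=JI^2$ while $I^2\neq JI$ (e.g. $x^2yz\in I^2\setminus JI$), yielding $\red(I)=2$.
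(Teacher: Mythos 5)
Your reduction is the same as the paper's: non-Gorenstein plus the Hilbert function $(1,3,1)$ forces a linear form $\ell$ with $\ell\,\m\subset I$, a change of variables makes $\ell=x$, and the problem becomes the classification of the two-dimensional space $W\subset\langle y^2,yz,z^2\rangle$ with $I=x\m+W$, the codimension-$3$ hypothesis saying exactly that the two generators of $W$ have no common factor. Your handling of the two final claims also matches the paper: non-syzygeticity via $I^2\neq\m^4$ (the contrapositive of the argument in Proposition~\ref{Prop2.1}), where your explicit witness $y^3z$ is a nice touch, and $\red(I)=2$ via explicit minimal reductions (the paper passes to $\overline{K}$ by faithful flatness, rescales to $a=c=1$, and takes $(x^2,y^2,z^2)$ and $(x^2,yz,y^2+z^2)$; neither you nor the paper verifies $I^3=JI^2$ on the spot, that computation being in effect supplied by Proposition~\ref{Prop2.3}).

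Where you diverge, and where your write-up has its one genuine gap, is the classification of $W$. You carry it out for $\mbox{\rm char}\,K\neq 2$ by discriminant and Galois considerations (squarefree $\Delta$, rational versus conjugate pairs of square members), and you explicitly defer the arbitrary-field, characteristic-$2$ case as ``where the real work lies.'' Since the Proposition is stated over an arbitrary field, that deferral leaves your proof incomplete as written. But the work you are postponing is a two-line row reduction, and it is exactly what the paper does, uniformly in all characteristics: since $W$ is two-dimensional and cannot consist entirely of forms with zero $y^2$-coefficient (else $z$ would be a common factor), you may choose generators $q_1,q_2$ with $q_1$ having zero $y^2$-coefficient and $q_2$ not; a nonzero binary quadric with no $y^2$-term factors automatically as $q_1=z(by+cz)$. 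If $b\neq 0$, a change of variables makes $q_1=yz$, and reducing $q_2$ modulo $q_1$ gives $ay^2+cz^2$ with $a,c\neq 0$ (the no-common-factor condition): the second form. If $b=0$, then $q_1=z^2$ after scaling; killing the $z^2$-term of $q_2$ gives $y(d'y+e'z)$, and either $e'\neq 0$, a split member with distinct factors, which reverts to the previous case with the roles of $q_1,q_2$ exchanged, or $e'=0$ and $W=\langle y^2,z^2\rangle$: the first form. No discriminants, no Galois descent, no characteristic hypothesis. So your intended dichotomy is the right one and your outline would close, but the proof you actually give covers only $\mbox{\rm char}\,K\neq 2$, and the uniform argument that finishes the job is simpler than the one you wrote.
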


\begin{proof} Let $\m=(x,y,z)$. 
Since $\lambda(\m^2/I)= 1$, $\m^2$ is contained in the socle of $I$. A larger
socle would contain a linear form $h$, so $h\cdot \m\subset I$. 
We may assume that $K$ is infinite. We change variables and set $x=h$. With $x(x,y,z) \subset I$, $I$ can be written as 
\[ I = (x\cdot \m, q_1, q_2),\]
where $q_1$ and $q_2$ are quadratic forms in $y,z$.

\medskip

 If $q_1 = ay^2+byz+cz^2$ and $q_2 = dy^2+eyz+fz^2$, we cannot have $a=d=0$, but by subtraction we can assume
that one of them is $0$ and the other nonzero. If $q_1 = (by + cz)z$ we can by change of variables assume that either 
$q_1 = yz$ or $q_1 = z^2$. In the first case we eliminate the mixed term in $q_2$ to get $q_2 = d'y^2 + f'z^2$. In the other case,
we eliminate the $z^2$ term in $q_2$ and get $q_2= d'y^2 + e'yz$. At the point we reverse the roles get $q_2 = yz$ and get
$q_1 = a'y^2 + c'z^2$.

\medskip

We carry out a last change of variables and set $y=h_1$ and $z=h_2$. In the
first case we have
\[ I = ( x^2, xy, xz, y^2, z^2),\]
while in the second case we have
\[ I = ( x^2, xy, xz, yz, ay^2 +cz^2).\] 
In both cases, by the proof of  Proposition~\ref{Prop2.1}, the ideal $I$ is not syzygetic because $I^2 \neq \m^4$.

\medskip

 By faithful flatness, we may pass to the algebraic closure of $K$ and, by rescaling, we may assume $a=b=1$.
 Then these ideals have reduction number $2$
since  $(x^2, y^2, z^2)$ and $(x^2, yz, y^2+z^2)$ are respective minimal reductions.
\end{proof}

To complete the picture we should compute $\e_1(I)$ for submaximally generated ideal $I$.

\begin{Proposition} \label{Prop2.3}
Let $\RR = K[x,y,z]$ with $K$ infinite field,
 and $I$ an ideal of codimension $3$ minimally generated by $5$ quadrics.
\begin{itemize}
\item[{\rm (i)}] $\e_1(I)=4$ and thus $\RR[It]$ has the condition $R_1$ of Serre;
\item[{\rm (ii)}] If $I$ is not Gorenstein, then $\depth  \gr_{I}(\RR) = 2$.
\end{itemize}
\end{Proposition}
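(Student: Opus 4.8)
The plan is to determine $\e_1(I)$ first and to extract both the $R_1$ condition and the depth from it, combined with the explicit normal forms of Proposition~\ref{Prop2.2}. For part (i) the upper bound $\e_1(I)\le\e_1(\m^2)=4$ is already recorded, so everything rests on the reverse inequality. Here I would invoke Northcott's inequality $\e_1(I)\ge\e_0(I)-\lambda(\RR/I)=8-5=3$, whose equality case (Huneke--Ooishi) occurs precisely when $\red(I)\le 1$. Since $\red(I)=2$ in every case (Propositions~\ref{Prop2.1} and~\ref{Prop2.2}), the inequality is strict, forcing $\e_1(I)\ge 4$ and hence $\e_1(I)=4$. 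To pass from this to $R_1$, I would observe that $\RR[\m^2t]$ is the normalization of $\RR[It]$ (because $\overline{I^n}=\m^{2n}$), with cokernel $C=\bigoplus_{n}\m^{2n}/I^n$; as $\e_0$ and $\e_1$ of $I$ and of $\m^2$ now agree, $\lambda(C_n)=\lambda(\RR/I^n)-\lambda(\RR/\m^{2n})$ is eventually a polynomial of degree $\le d-2=1$, so $\dim C\le 2$ and $C$ has codimension $\ge 2$ in $\RR[It]$. Thus $\RR[It]$ agrees with its normalization at every height-one prime and satisfies $R_1$.

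For part (ii) I would reduce, via Proposition~\ref{Prop2.2} and faithfully flat base change, to the explicit normal forms, working out the representative $I=(x^2,xy,xz,y^2,z^2)$ with minimal reduction $J=(x^2,y^2,z^2)$ and treating the form $(x^2,xy,xz,yz,ay^2+cz^2)$ analogously. The central local computation is that $I^2=JI+(x^2yz)$ with $x^2yz\notin JI$ but $\m\cdot x^2yz\subseteq JI$, giving $\lambda(I^2/JI)=1$; since $\red(I)=2$ yields $I^n=JI^{n-1}$ for $n\ge 3$, this produces $\sum_{n\ge 1}\lambda(I^n/JI^{n-1})=\lambda(I/J)+\lambda(I^2/JI)=3+1=4=\e_1(I)$, where $\lambda(I/J)=\lambda(\RR/J)-\lambda(\RR/I)=8-5=3$. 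By the Huckaba--Marley theorem, the equality $\e_1(I)=\sum_{n\ge 1}\lambda(I^n/JI^{n-1})$ holds if and only if $\depth\gr_I(\RR)\ge d-1=2$, which gives the lower bound. For the matching upper bound I would apply the Valabrega--Valla criterion: the same element witnesses $x^2yz\in(J\cap I^2)\setminus JI$, so $J\cap I^2\ne JI$ and $\gr_I(\RR)$ fails to be Cohen--Macaulay, forcing $\depth\gr_I(\RR)\le 2$. Hence $\depth\gr_I(\RR)=2$.

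The hard part will be the lower bound $\depth\gr_I(\RR)\ge 2$, and it is exactly here that the non-Gorenstein hypothesis enters. The numerical data $\e_0=8$, $\e_1=4$, $\lambda(\RR/I)=5$, and $\red=2$ are identical in the syzygetic case, where instead $\depth\gr_I(\RR)=0$ by Proposition~\ref{Prop2.1}; what distinguishes the two regimes is the value of $\lambda(I^2/JI)$, which equals $1$ for the non-Gorenstein models but is strictly larger in the syzygetic case (so that Huckaba--Marley's inequality becomes strict there). Thus the crux is the precise identity $I^2=JI+(x^2yz)$ and its length computation for each normal form, after which Huckaba--Marley converts it into the depth statement. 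By comparison, Northcott's bound, the $R_1$ deduction, and the Valabrega--Valla non-Cohen--Macaulayness are routine.
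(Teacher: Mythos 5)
Your proposal is correct, and its core follows the paper's own proof: part (ii) is essentially the paper's argument verbatim (the same normal forms from Proposition~\ref{Prop2.2}, the same reductions $J$, the same identity $I^2=JI+(x^2yz)$ with $\m\cdot x^2yz\subseteq JI$, and the same appeal to Huckaba--Marley \cite{HucMar97} to get $\depth \gr_{I}(\RR)\geq 2$), and part (i) squeezes $\e_1(I)$ between the same two bounds. You diverge in three localized steps, all legitimately. First, for strictness of the lower bound the paper asserts positivity of the Sally-module multiplicity $s_0(I)=\e_1(I)-\e_0(I)+\lambda(\RR/I)$, while you invoke the Huneke--Ooishi equality case of Northcott's inequality together with $\red(I)=2$; these are the same fact, and both versions rest on the same case coverage --- Propositions~\ref{Prop2.1} and~\ref{Prop2.2} handle syzygetic and non-Gorenstein ideals, and the Gorenstein case is folded in via the implication Gorenstein $\Rightarrow$ syzygetic of \cite{HSV83}, which requires $\mathrm{char}\, K\neq 2$, a caveat your argument shares with the paper's. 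Second, for the passage from $\e_1(I)=\e_1(\m^2)$ to $R_1$, the paper simply asserts the implication, whereas your dimension count on the cokernel $C=\bigoplus_n \m^{2n}/I^n$ (degree of its Hilbert polynomial at most $1$, hence support of codimension at least $2$) is a correct, self-contained proof of that step. Third, for the upper bound $\depth \gr_I(\RR)\leq 2$ the paper argues that depth $3$ would make $\RR[It]$ Cohen--Macaulay, hence, with $R_1$, normal, hence equal to $\RR[\m^2t]$, a contradiction; you instead apply Valabrega--Valla with the witness $x^2yz\in (J\cap I^2)\setminus JI$. Your route is more elementary and does not depend on part (i) or on the equivalence between Cohen--Macaulayness of $\gr_I(\RR)$ and of $\RR[It]$; the paper's route ties into its normality theme. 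Both are valid.
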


\begin{proof}

\noindent (i) Let $\m$ be a maximal ideal containing $I$.
 Since $\m^2$ is the integral closure of $I$, we have $\e_{1}(I) \leq \e_{1}(\m^2) =4$. Moreover, we make use of  the multiplicity of the Sally module of $I$ (\cite[Theorem 2.11]{icbook}):
\[ 0< s_0(I) = \e_1(I) - \e_0(I) + \lambda(\RR/I) = \e_{1}(I) - 8 +5. \]
This implies that $\e_{1}(I)=4$. In particular, the equality $\e_1(I) = \e_1(\m^2)$ implies the condition $R_1$ for $\RR[It]$.

\medskip

\noindent (ii) Suppose that $I$ is not Gorenstein. By Proposition~\ref{Prop2.2}, the ideal
$I$ is either $ ( x^2, xy, xz, y^2, z^2)$ or $ ( x^2, xy, xz, yz, y^2 +z^2)$ up to change of variables and by passing to the algebraic closure of $K$. We claim that, in both cases, $\lambda(I^{2}/JI)=1$ for a minimal reduction $J$ of $I$. If $I= ( x^2, xy, xz, y^2, z^2)$, then we may choose $J=(x^2,y^2,z^2)$. Notice that $x^2yz=xy \cdot xz$ is the only generator of $I^2$ not in $JI$.
Thus, $I^2/JI \simeq R/(JI:(x^2yz))$. On the other hand, we have
                \[ x \cdot x^2yz = z \cdot x^3y \in JI, \quad 
                 y \cdot x^2yz = z \cdot x^2y^2 \in JI, \quad
                 z \cdot x^2yz = y \cdot x^2z^2 \in JI. \]
Therefore, $\lambda(I^2/JI)=1$. Now suppose that $I=(x^2,xy,xz,yz,y^2+z^2)$. Then we may choose 
    $J=(x^2,yz,y^2+z^2)$ so that $x^2y^2=(xy)^2$ is the only generator of $I^2$ not in $JI$.
Repeating the same argument as above, we find
\[ x \cdot x^2y^2 = y \cdot x^3y \in JI, \quad 
                 y \cdot x^2y^2 = x \cdot xy^3 \in JI, \quad
                 z \cdot x^2y^2 = y \cdot x^2yz \in JI,\]
which means that $\lambda(I^2/JI)=1$. Hence by \cite[Theorem 4.7]{HucMar97}, 
 $\depth  \gr_{I}(\RR) \geq 2$. Since $I\neq \m^2$,
by Serre's normality criterion we cannot have $\depth  \gr_{I}(\RR)=3$ as this would be equivalent to $\RR[It]$ being
Cohen--Macaulay and then $\RR[It] = \RR[\m^2t]$.

\end{proof}

In the next corollary we compute the Castelnuovo regularity of the Rees algebras of the ideals above. We are going 
to be guided by the techniques of \cite{Trung98}. In view of the fact that 
$\reg(\gr_I(\RR)) = \reg(\RR[It])$ (\cite[Proposition 4.1]{JU96}, \cite[Lemma 4.8]{Ooishi82}), we will find more convenient to deal with the associated graded ring. 

\medskip

\begin{Corollary} \label{regdim3}
For all these ideals the Castelnuovo regularity of the Rees algebras $\RR[It]$ is $2$, in particular these algebras
have relation type at most $3$ $($precisely $3$ in the Gorenstein case$)$.
\end{Corollary}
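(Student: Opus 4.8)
The plan is to reduce the entire statement to the computation of a single number, the Castelnuovo--Mumford regularity of the associated graded ring $G=\gr_I(\RR)$, and then to read off the relation type. Using the equality $\reg(\gr_I(\RR))=\reg(\RR[It])$ quoted before the corollary, it suffices to show $\reg(G)=2$. For the lower bound I would invoke the general fact that the reduction number never exceeds the regularity, so that $\reg(\RR[It])\ge\red(I)=2$ by Theorem~\ref{5quadrics}(v). The work is therefore in the upper bound $\reg(G)\le 2$, which I would obtain following the local cohomology techniques of \cite{Trung98}, writing $\reg(G)=\max_i\{a_i(G)+i\}$ and feeding in the three invariants already computed: $\red(I)=2$, the Hilbert coefficients $\e_0(I)=8$ and $\e_1(I)=4$ (so that $I$ and $\m^2$ share the same Hilbert polynomial), and $\depth G$, which is $0$ when $I$ is Gorenstein and $2$ otherwise.

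In the Gorenstein (hence syzygetic) case I would use the short exact sequence of Rees algebras $0\to\RR[It]\to\RR[\m^2t]\to C\to 0$ from the proof of Proposition~\ref{Prop2.1}. Here $C$ has finite length, and since $I^n=\m^{2n}$ for $n\ge 2$ and $\lambda(\m^2/I)=1$, one checks that $C$ is concentrated in the lowest degree, spanned by the image of a single quadric, so $\reg(C)\le 1$. The ring $\RR[\m^2t]$ is normal and Cohen--Macaulay and $\m^2$ has reduction number $1$, whence $\reg(\RR[\m^2t])=1$ by the Cohen--Macaulay case of the regularity formula. The standard behaviour of regularity in a short exact sequence then gives $\reg(\RR[It])\le\max\{\reg(\RR[\m^2t]),\,\reg(C)+1\}=2$. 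The delicate point is that $\depth G=0$, so $H^{0}_{G_+}(G)\ne 0$ and one must check that neither this module nor the top local cohomology drives $\max_i\{a_i(G)+i\}$ above $2$; it is precisely the finite length of $C$, together with the explicit degree in which it sits, that keeps this defect under control.

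In the non-Gorenstein case I would work instead from the two explicit normal forms $I=(x^2,xy,xz,y^2,z^2)$ and $I=(x^2,xy,xz,yz,y^2+z^2)$ of Proposition~\ref{Prop2.2}, for which Proposition~\ref{Prop2.3} gives $\depth G=2=\dim G-1$, i.e. $G$ is almost Cohen--Macaulay. For such a $G$ the local cohomology analysis of \cite{Trung98} bounds $a_i(G)+i$ by the reduction number for $i=\dim G$ and leaves only the single possible defect coming from $H^{\dim G-1}_{G_+}(G)$; since $\red(I)=2$ and the Hilbert function of $I$ agrees with that of $\m^2$ outside finitely many low degrees, both contributions are at most $2$, so again $\reg(\RR[It])=2$. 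The main obstacle here is to locate the end of $H^{\dim G-1}_{G_+}(G)$ sharply enough to rule out the value $3$, and the computation $\lambda(I^2/JI)=1$ from Proposition~\ref{Prop2.3} is exactly the input that makes this possible.

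Finally, the statement on relation type follows formally. Writing $\RR[It]=\RR[T_1,\dots,T_5]/\mathcal J$ with the $T_i$ of $t$-degree $1$, the minimal generators of $\mathcal J$ occur in total degree at most $\reg(\RR[It])+1=3$, and the relation type, being the largest $T$-degree of such a generator, is bounded by the total degree; hence $\mathrm{rt}(I)\le 3$ in all cases. To see that the Gorenstein ideals have relation type \emph{exactly} $3$, I would note first that they are not of linear type, since $\nu(I)=5$ exceeds the codimension $3$, so $\mathcal J$ has minimal generators beyond $T$-degree $1$; and syzygeticity, $\delta(I)=0$, forces the $T$-degree $2$ part of $\mathcal J$ to be generated by the linear syzygies, so that there is no minimal generator in $T$-degree $2$. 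A minimal generator must therefore appear in $T$-degree $3$, and by the regularity bound none appears higher, giving $\mathrm{rt}(I)=3$ precisely.
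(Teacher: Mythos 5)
Your proof is correct, but in the key (syzygetic/Gorenstein) case it runs along a genuinely different track from the paper's. The paper works with associated graded rings: from $\m^2/I\simeq K$ and $I^n=\m^{2n}$ for $n\ge 2$ it builds the four-term exact sequence $0\to K\to \gr_I(\RR)\to\gr_{\m^2}(\RR)\to K[-1]\to 0$, splits it into two short exact sequences, and computes \emph{every} local cohomology module of $\gr_I(\RR)$ exactly ($\H^0=K$, $\H^1=K[-1]$, $\H^2=0$, $\H^3=\H^3(\gr_{\m^2}(\RR))$), so that $\reg = \max\{0,2,0,1\}=2$ drops out with no separate lower bound needed. You instead use the Rees-algebra sequence $0\to\RR[It]\to\RR[\m^2t]\to C\to 0$ with $C\cong K$ concentrated in degree $1$, the subadditivity estimate $\reg(\RR[It])\le\max\{\reg(\RR[\m^2t]),\,\reg(C)+1\}=2$, and then must import the inequality $\red(I)\le\reg$ (Marley/Trung) to force equality. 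Both are legitimate; the paper's route is sharper (it recovers the full local cohomology of $\gr_I(\RR)$, reconfirming $\depth=0$ along the way and avoiding any appeal to the reduction-number lower bound), while yours is shorter and more formal but needs that one extra external ingredient, which is indeed available in the cited literature. In the non-Gorenstein case you are, despite the re-derivation language about locating the end of $\H^{\dim G-1}(G)$, invoking exactly what the paper cites (Trung's Theorem 6.4, Marley's Corollary 2.2: $\depth \gr_I(\RR)\ge\dim\RR-1$ implies $\reg=\red(I)$), so it is cleaner simply to cite it; the role of $\lambda(I^2/JI)=1$ is only to establish $\depth \gr_I(\RR)=2$ via Huckaba--Marley, not to bound cohomology degrees directly. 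Finally, your argument that the Gorenstein ideals have relation type \emph{exactly} $3$ --- not of linear type since $\nu(I)=5>3$ for an $\m$-primary ideal, no minimal relation in $T$-degree $2$ since $\delta(I)=0$, and none above degree $3$ by the regularity bound --- is a genuine addition: the paper asserts the value $3$ without proof, and your argument is the right one.
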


\begin{proof}
We separate the two cases.
Suppose that $I$ is not syzygetic. Then $\depth \gr_I(\RR) = 2$. According to \cite[Theorem 6.4]{Trung98} (see also \cite[Corollary 2.2]{Marley93}), $\reg(\RR[It]) = \red(I) = 2$.

\medskip

If $I$ is syzygetic, set $\AA = \gr_I(\RR)$ and $\BB =  \gr_{\m^2}(\RR)$. 
We are going to make use of the fact that $\m^2/I \simeq K $ and $I^n = \m^{2n}$ for all $n \geq 2$.
  Consider the exact sequence of finitely generated graded modules $\AA$-modules
\[ 0 \rar K \lar \AA \lar \BB \lar K[-1] \rar 0,\]
where $K$ and $K[-1]$ are copies of $K$ concentrated in degrees $0$ and $1$ (resp.), which we break up
into two short exact sequences:
\[ 0 \rar K \lar \AA \lar L  \rar 0,\]
\[ 0 \rar L  \lar \BB \lar K[-1] \rar 0.\]
To compute the Castelnuovo regularity of $\AA$ we apply the local cohomology functors 
$\H^i(\cdot) = \H_{\AA_{+}}^i( \cdot)$ and obtain the isomorphisms (recall that $\BB$ is
Cohen-Macaulay):
\[ \H^{0}(\AA)= K, \quad \H^{1}(\AA)=K[-1], \quad \H^{2}(\AA)=0, \quad \H^{3}(\AA)=\H^{3}(\BB).\]
Denoting by $a_i(\cdot)$ the function that reads the top degree of these cohomology modules, in the
formula
\[ \reg(\AA) = \max\{ a_i(\AA)+ i \mid 0\leq i \leq 3\}, \]
therefore
\[ \reg(\AA)= \max\{0, 2, 0, a_3(\BB)+3\}.\]
But $a_3(\BB)+3 = \reg(\BB)= \red(\m^2) = 1$, since $\BB$ is 
Cohen-Macaulay. Thus $\reg(\AA)=2$.
An alternative proof would be by the use of \cite[Proposition 4]{Ooishi82}.

\end{proof}

\begin{Remark} \label{remregdim3}{\rm 
A similar statement and proof will apply to the special fiber $\mathcal{F}(I) = \RR[It]\otimes \RR/\m$ in the
case of Gorenstein ideals.
We consider the commutative diagram
 \[
\diagram
0 \rto & \m\RR[It] \rto\dto  & \m\RR[\m^2t] 
\rto\dto  & 0 \rto\dto & 0 \\
0 \rto & \RR[It] \rto\dto  & \RR[\m^2t] 
\rto\dto  & K[-1] \rto\dto & 0 \\
0 \rto & \mathcal{F}(I)\rto  & \mathcal{F}(\m^2) 
\rto  & K[-1] \rto & 0. 
\enddiagram
\]
Since $\m^2/I \simeq K$, $I^n= \m^{2n}$ for all $n\geq 2$ and $\m^3 = \m I$, the two top rows are exact. Thus
the bottom row is also exact  and therefore both the depth and regularity of $\mathcal{F}(I)$ can
be read off it, in particular $\depth \mathcal{F}(I) = 1$.

\medskip

We can also calculate the depth of the special fiber for
all the other ideals generated by $5$ quadrics. We may assume that $K$ is algebraically closed. 
Our discussion
showed that they all belong to the orbits of $(x^2, xy, xz, y^2, z^2)$ and $(x^2, xy, xz, yz, y^2+z^2)$.
Treating each by {\em Macaulay2}, they both have Cohen--Macaulay special fibers. In particular, they do not satisfy the condition $R_1$.

}\end{Remark}

\subsubsection*{Ideals generated by $4$ quadrics}

Now we complete our study of the Rees algebras of ideals generated by quadrics in $3$-space
by considering  ideals $I$ generated by $4$ quadrics.

\begin{Proposition}\label{4quadrics}
Let $I=(J,a)$ be an ideal minimally generated by $4$ quadrics of codimension $3$ in $\RR=K[x,y,z]$, where $J$ is a minimal reduction. Then 

\begin{enumerate}
\item[{\rm (i)}] $I$ has reduction number $1$ or $3$, and $I$ is never syzygetic. If $\red(I) = 1$,
$\RR[It]$ is Cohen-Macaulay and does not satisfy the condition $R_1$.

\item[{\rm (ii)}] If $\red(I) = 3$, $\depth \RR[It] = 3$ and satisfies the condition $R_1$.

\end{enumerate}
\end{Proposition}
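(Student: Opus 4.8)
The plan is to extract everything from the morphism defined by the quadrics. Over an infinite $K$ a minimal reduction $J$ is a complete intersection of three quadrics, so $\e_0(I)=\lambda(\RR/J)=8$; the Hilbert function $(1,3,2)$ recorded above gives $\lambda(\RR/I)=6$ and $\lambda(I/J)=2$. Writing $I=(J,a)$ one has $I^{n+1}=JI^n+(a^{n+1})$, so $\red(I)=\min\{n:\ a^{n+1}\in JI^n\}$ and $\red(I)=1\iff a^2\in JI$. Because $I$ is $\m$-primary the four quadrics have no common zero in $\PP^2$, so they define a morphism $\phi\colon\PP^2\to\PP^3$; since they are $K$-linearly independent the image surface $S=\phi(\PP^2)$ is nondegenerate, whence $\deg S\ge 2$.

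The decisive input is the projection formula $\deg\phi\cdot\deg S=(2H)^2=4$ on $\PP^2$, where $H$ is the hyperplane class (here $\phi^*\mathcal O_{\PP^3}(1)=\mathcal O_{\PP^2}(2)$, and $\phi$ is generically finite onto $S$ since $\dim S=\ell(I)-1=2$). With $\deg S\ge 2$ this leaves only $(\deg S,\deg\phi)\in\{(2,2),(4,1)\}$, so $\deg S\in\{2,4\}$ and the value $3$ is excluded arithmetically. I then identify the special fiber: $\mathcal F(I)=K[T_1,\dots,T_4]/P$ with $P=\ker(T_i\mapsto q_i)$; as $\dim\mathcal F(I)=\ell(I)=3$, $P$ is a height-one prime of a factorial ring, hence $P=(F)$ is principal with $\deg F=\deg S$. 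Thus $\mathcal F(I)$ is a Cohen--Macaulay hypersurface whose $h$-vector is a string of $\deg F$ ones, and therefore $\red(I)=\red(\mathcal F(I))=\deg F-1\in\{1,3\}$. The two cases are $\deg\phi=2$ (reduction number $1$) and $\deg\phi=1$ (reduction number $3$); since birationality of the quadrics is equivalent to $\RR[It]$ having $R_1$ by \cite[Proposition 3.3]{syl2}, the case $\deg\phi=1$ is precisely the $R_1$ case.

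The cohomological statements now follow case by case. If $\red(I)=1$ then $\gr_I(\RR)$ is Cohen--Macaulay (reduction number one), and since $\red(I)=1\le d-1$ the algebra $\RR[It]$ is Cohen--Macaulay; as $I\neq\m^2$ and a Cohen--Macaulay ring is $S_2$, Serre's criterion rules out $R_1$ (it would give $\RR[It]=\RR[\m^2t]$, i.e.\ $I=\m^2$), exactly as in Proposition~\ref{Prop2.1}. If $\red(I)=3$ I argue as in Proposition~\ref{Prop2.3}: one checks $\m I^2\subseteq JI$, so $I^2/JI\simeq K$ and $\lambda(I^2/JI)=1$, whence $\depth\gr_I(\RR)\ge 2$ by \cite[Theorem 4.7]{HucMar97}; here $\gr_I(\RR)$ is not Cohen--Macaulay, since Cohen--Macaulayness of $\gr_I(\RR)$ is equivalent to $\red(I)=1$ in this setting, so $\depth\gr_I(\RR)=2$ and hence $\depth\RR[It]=3$ by \cite[Proposition 1.1]{Hu0}, while $R_1$ holds because $\phi$ is birational. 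Finally $I$ is never syzygetic: since $(\RR/I)_3=0$ the map $\RR_1\otimes_K I_2\to\RR_3$ is surjective, giving a two-dimensional space of linear (degree-three) syzygies among the quadrics; these are not Koszul (Koszul syzygies of quadrics occur in degree four), and comparing the Hilbert functions of $\Sym_2(I)$ and $I^2$ yields $\delta(I)\neq 0$---already in degree four when $\red(I)=1$, since there $\nu(I^2)=9<\binom{5}{2}$, and in degree five when $\red(I)=3$, via the linear syzygies.

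The step I expect to be the main obstacle is the identification of the special fiber as the hypersurface $K[T_1,\dots,T_4]/(F)$, equivalently that the fiber cone is reduced and equals the homogeneous coordinate ring of $S$: the whole reduction-number dichotomy, and in particular the exclusion of $\red(I)=2$, rests on $\deg F=\deg S\in\{2,4\}$ together with $\mathcal F(I)$ being a Cohen--Macaulay hypersurface. I would secure this either by the height-one/factoriality argument above reinforced by a reducedness check on $\mathcal F(I)$, or---following the pattern of the structure theorems---by reducing $I$ to its finitely many binomial normal forms and writing down $F$ explicitly in each.
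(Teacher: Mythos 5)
Most of what you do is correct and takes a genuinely different route from the paper, which disposes of the key dichotomy and the syzygetic claim by citation: for $\red(I)\in\{1,3\}$ the paper simply quotes \cite[Proposition 3.3]{syl2}, whereas you rederive it from the projection formula $\deg\phi\cdot\deg S=4$ plus the structure of the special fiber; and for non-syzygeticity the paper quotes the formula of \cite[Proposition 3.7]{syl2}, whereas you give an elementary degree count ($\nu(I^2)=\dim\mathcal{F}(I)_2=9<\binom{5}{2}$ when $\red(I)=1$, and a comparison of $\Sym_2(I)$ with $I^2$ in degree $5$ using the two linear syzygies when $\red(I)=3$). Both of these are sound, and the step you flag as the ``main obstacle'' is not one: for an ideal generated by forms of a single degree, $(\m I^n)_{2n}=0$ and $(I^n)_{\geq 2n+1}\subseteq \m I^n$, so $\mathcal{F}(I)\simeq K[q_1,\ldots,q_4]\subset \RR$ is automatically a \emph{domain}; hence its defining prime is principal, no reducedness check is needed, and $\red_J(I)=\deg F-1$ follows by graded Nakayama together with the $h$-vector $(1,1,\ldots,1)$ of a hypersurface. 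Your treatment of the case $\red(I)=1$ (Valabrega--Valla, Goto--Shimoda, then Serre's criterion against $R_1$) is also fine and matches the paper's intent.

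The genuine gap is in part (ii). Huckaba--Marley \cite[Theorem 4.7]{HucMar97} yields $\depth\gr_I(\RR)\geq d-1$ only when $\e_1(I)$ equals the \emph{full} sum $\sum_{n\geq 1}\lambda(I^n/JI^{n-1})$; the single equality $\lambda(I^2/JI)=1$ is not enough. Since $\red(I)=3$, the sum is $\lambda(I/J)+\lambda(I^2/JI)+\lambda(I^3/JI^2)=2+\lambda(I^2/JI)+\lambda(I^3/JI^2)$, so you must also control $\lambda(I^3/JI^2)$ and you must know $\e_1(I)=4$ --- neither appears in your write-up, and the Sally-module positivity used in Proposition~\ref{Prop2.3} gives only $\e_1(I)\geq 3$ here because $\lambda(\RR/I)=6$, not $5$. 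Moreover ``one checks $\m I^2\subseteq JI$'' has nothing concrete to be checked against: unlike Proposition~\ref{Prop2.3}, no normal form for these ideals is available, and this is exactly the extra content the paper supplies (it quotes \cite[Proposition 3.12]{acm} for $\lambda(I^2/JI)\leq 1$, proves $I^3=(a^2b)+JI^2$ by hand, and uses $\red(I)=3\Leftrightarrow R_1\Leftrightarrow\e_1(I)=4$). Fortunately your setup admits a purely numerical repair: by the Huckaba--Marley \emph{inequality}, $\e_1(I)\geq 2+\lambda(I^2/JI)+\lambda(I^3/JI^2)\geq 2+1+1=4$, each term being nonzero because $\red(I)=3$, while $\e_1(I)\leq\e_1(\m^2)=4$ since $\m^2$ is the integral closure of $I$; the forced equality gives both lengths equal to $1$ and $\depth\gr_I(\RR)\geq 2$. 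Two smaller repairs: your claim that Cohen--Macaulayness of $\gr_I(\RR)$ is equivalent to $\red(I)=1$ ``in this setting'' is unproved (it needs an $h$-vector argument: a Cohen--Macaulay $\gr_I(\RR)$ would have $h$-vector $(6,h_1,h_2,\ldots)$ with $\sum_{i\geq 1}h_i=\e_0(I)-\lambda(\RR/I)=2$, incompatible with $\red(I)=3$), and it can be bypassed by the paper's route: $\depth\gr_I(\RR)\geq 2$ forces $\depth\RR[It]\geq 3$, while $R_1$ plus Cohen--Macaulayness of $\RR[It]$ would give normality, hence $I=\m^2$, a contradiction; also the implication $\depth\gr_I(\RR)=2\Rightarrow\depth\RR[It]=3$ is due to Huckaba--Marley rather than \cite[Proposition 1.1]{Hu0}.
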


\begin{proof}
 We already observed that $\lambda(\RR/I) = 6$ and $\e_1(I) \leq 4$. We recall
that $\e_1(I) = 4$ means that $\RR[It]$ has the condition $R_1$ of Serre.
By \cite[Proposition 3.3]{syl2} $\red(I) = 1, 3$, where $\red(I)=3$
corresponds to the condition $R_1$. Moreover, 
an $\m$-primary almost complete intersection, in any dimension,  is never syzygetic
according to the formula of    \cite[Proposition 3.7]{syl2}. 
 
\medskip

Note that $\lambda(I/J) =2$. Let $u=\lambda(I^2/JI)$ and $v=\lambda(I^3/JI^2)$.
By \cite[Proposition 3.12]{acm}, $\lambda(I^2/JI) = 1$ or $0$, depending on whether $J:I =  \m$ or not.
If $u=v=1$, by
Huckaba's theorem (\cite[Theorem 2.1]{Huc96},
\cite[Theorem 4.7(b)]{HucMar97}) 
 we would have that $\depth \ G = 2$.  

 If $u=1$, we have $I^2 = (ab)+JI$, $a,b\in I$. We claim that $I^3 = (a^2b)+ JI^2$. Indeed, for
$u,v,w\in I$ we have $uv = rab +s, \quad s \in JI$. Therefore
$ uvw = rabw + ws $, and if we write $bw = pab + q$, where  $q\in I$, we have
\[ uvw = rp a^2b  + A, \quad A = ws + raq\in JI^2.\]  
 We now have the data to apply Huckaba's theorem (\cite[Theorem 2.1]{Huc96}): $\e_1(I) = 4$, $\e_0(I) = 8$, 
$\lambda(\RR/I) = 6$, $\lambda(I^2/JI) = \lambda(I^3/JI^2)= 1$,
\[ 4-8+6 = 1+1,\]
and thus $\depth \gr_I(\RR) \geq 2$, but as we cannot have $\RR[It]$ to be Cohen-Macaulay as otherwise,
with the condition $R_1$ we would have $\RR[It] = \RR[\m^2t]$.

\end{proof}

\section{Finiteness of Gorenstein ideals} Our next goal is an analysis of the conjectural fact that the number
of Gorenstein ideals of $\RR=K[x_{1}, \ldots, x_{d}]$ generated by $\nu(\m^{2})-1$  quadrics is finite, up to change of
variables. The correct number may depend 
on $K$. 

\medskip

\begin{Theorem} \label{abc} Let $\RR=K[x_1, \ldots, x_d]$ with $d \geq 3$ and $\m=(x_{1}, \ldots, x_{d})$. Let $K=\mathbb{R}$ and let
 $I$ be an $\RR$--ideal of codimension $d$ minimally generated by $\nu(\m^{2})-1$ quadrics.
If $I$ is a Gorenstein ideal then up to change of variables, $I$ is one of the following $d$ ideals
\[ I = (x_{i} x_{j}, x_{1}^2\pm x_{k}^2 \mid 1 \leq i < j \leq d, \; k=2, \ldots, d).\]
The converse holds for all fields $K$.
\end{Theorem}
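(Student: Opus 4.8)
The plan is to encode the ideal by the single symmetric bilinear form given by multiplication into the one-dimensional top socle of $\RR/I$, to recognize the Gorenstein property as nondegeneracy of that form, and then to invoke Sylvester's law of inertia over $\mathbb{R}$. Concretely, I would first use the Hilbert function $(1,d,1)$ already recorded above for Gorenstein ideals of this type: it gives $\dim_K(\RR/I)_2=1$, and since $I$ is generated by quadrics we have $I=(I_2)$ with $I_2=\ker\big(\RR_2\surjects (\RR/I)_2\big)$ of dimension $\binom{d+1}{2}-1$. Fixing an identification $(\RR/I)_2\cong K$, I introduce the symmetric bilinear form
\[ \beta\colon \RR_1\times\RR_1\lar K,\qquad \beta(u,v)=\overline{uv}\in(\RR/I)_2\cong K, \]
on the $d$-dimensional space of linear forms. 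Since $I_2$ is exactly the kernel of $\RR_2\to(\RR/I)_2$ and $I=(I_2)$, the form $\beta$ carries all the information of $I$.

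The key dictionary is the following. Because $(\RR/I)_3=0$, the whole degree-$2$ component of $\RR/I$ lies in the socle, so $I$ is Gorenstein (socle one-dimensional) if and only if there is no nonzero linear form in the socle, i.e. no $0\neq\ell\in\RR_1$ with $\ell\,\m\subseteq I$. As $(\RR/I)_2$ is one-dimensional, the condition $\ell x_i\in I$ for all $i$ reads $\beta(\ell,x_i)=0$ for all $i$, so the degree-$1$ part of the socle is precisely the radical of $\beta$; hence $I$ is Gorenstein if and only if $\beta$ is nondegenerate. Over $K=\mathbb{R}$, Sylvester's law of inertia then provides a linear change of variables $y_1,\dots,y_d$ with $\beta(y_i,y_j)=0$ for $i\neq j$ and $\beta(y_i,y_i)=\pm1$, where after scaling we may take $\beta(y_1,y_1)=1$ and use $\overline{y_1^2}$ as socle generator. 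Reading this off in $\RR$ gives $y_iy_j\in I$ for $i<j$ and $y_1^2\pm y_k^2\in I$ for $k=2,\dots,d$; these $\binom{d}{2}+(d-1)=\binom{d+1}{2}-1$ quadrics are linearly independent and lie in $I_2$, hence span it, and since $I=(I_2)$ we obtain $I=(x_ix_j,\ x_1^2\pm x_k^2)$ after renaming. The signs record the signature of $\beta$, and up to permuting $x_2,\dots,x_d$ only the number of minus signs matters, giving exactly the asserted list of $d$ ideals.

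For the converse I would argue directly and uniformly in the characteristic. Given $I=(x_ix_j\ (i<j),\ x_1^2\pm x_k^2\ (k\geq2))$, the relations force $x_ix_j\equiv0$ and $x_k^2\equiv\mp x_1^2$, and a short check shows every cubic monomial lies in $I$, so $\RR/I$ has $K$-basis $\{1,x_1,\dots,x_d,x_1^2\}$. Thus $\lambda(\RR/I)=d+2<\infty$ (so $\codim I=d$) and the listed quadrics form a minimal generating set. The socle is $\langle x_1^2\rangle$: each $x_i^2$ is a nonzero scalar multiple of $x_1^2$, so no linear form lies in the socle, while $x_1^2$ is annihilated by every variable since $x_jx_1^2=(x_jx_1)x_1\in I$ for $j\neq1$ and $x_1^3\in I$. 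Hence $\RR/I$ is Artinian Gorenstein, with no restriction on $\mathrm{char}\,K$; in particular the coincidence $x_1^2+x_k^2=x_1^2-x_k^2$ in characteristic $2$ is harmless.

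I expect the only substantive points to be the equivalence ``$I$ Gorenstein $\iff\beta$ nondegenerate'' together with the recovery of $I$ from $I_2$; once these are in place the classification is exactly Sylvester's theorem and no hard computation remains. Equivalently, the whole argument can be run through Macaulay's inverse systems: $I$ is the apolar ideal $F^{\perp}$ of a quadratic form $F$ in the dual variables, the equality $\dim_K(\RR/I)_1=d$ forces $\rank F=d$, and Sylvester classifies the nondegenerate $F$ over $\mathbb{R}$, yielding the same list.
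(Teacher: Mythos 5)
Your proof is correct and takes essentially the same approach as the paper: both encode degree-two multiplication into the one-dimensional socle as a symmetric bilinear form, identify the Gorenstein property with nondegeneracy of that form, and then diagonalize over $\mathbb{R}$ (the paper via an auxiliary inner product, the spectral theorem and rescaling; you via Sylvester's law of inertia, which is the same reduction), concluding by the dimension count that the resulting monomials and binomials span $I_2$. Your converse---showing no linear form lies in the socle, valid in every characteristic---is also the paper's argument.
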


\begin{proof} Let $I$ be one of these ideals and set
$\AA = \RR/I$. Fix $h\in \m^2$ a generator of the socle of $\AA$. Let $V= \AA_1$ the $d$-dimensional
vector space generated by the linear forms of $\AA$. 
On $V$ define the scalar product $\circ$ that makes 
 $\{x_{1}, \ldots, x_{d} \}$  an orthonormal basis. Finally define a quadratic form on $V$ by the rule
 \[ u, v\in V\quad uv = C(u,v)h, \quad C(u,v) \in \mathbb{R}.\]
 $C(\cdot, \cdot)$ is a symmetric bilinear form: let  $\BB$ be the corresponding symmetric matrix,
 \[ C(u,v) = u\circ \BB(v).\]
Since the socle of $\AA$ contains no linear form, $\BB$ is nonsingular of eigenvalues $\{a_{1}, \ldots, a_{d}\}$.
 By the spectral
Theorem, $\BB = \PP\DD \PP^{-1}$ , where $\PP$ is orthogonal and $\DD$ is the  diagonal matrix
$[a_{1}: a_{2}: \cdots: a_{d}]$. 
The change of variables
\[ \{X_{1}, X_{2}, \ldots, X_{d}\} = \PP \{x_{1}, x_{2}, \ldots, x_{d} \}\]
then give that the quadrics
\[\{ X_{i}X_{j}, a_{2}X_{1}^2-a_{1}X_{2}^2, \ldots, a_{d}X_{1}^2-a_{1}X_{d}^2\} \subset I,\]
and therefore generate the ideal $I$ since they are linearly independent. By rescaling and taking appropriate square roots we conclude that $I$ is one of $(x_{i} x_{j}, x_{1}^2\pm x_{k}^2 \mid 1 \leq i < j \leq d, \; k=2, \ldots, d)$.

\medskip

 For the converse, let $I=(x_{i} x_{j}, x_{1}^2\pm x_{k}^2 \mid 1 \leq i < j \leq d, \; k=2, \ldots, d)$.  Note that $\m^{3}=\m I$, and therefore the Hilbert function of $\AA$ is $(1,d, 1)$.
Suppose that $I$ is not a Gorenstein ideal. Then there exists a linear form $h$ in the socle of $\AA$, that is
 $h \cdot \m \subset I$. Let $h=\sum_{j=1}^{d} a_{j}x_{j}$, where $a_{i} \in K$. Then, for each $i=1, \ldots, d$, we have $x_{i}\sum_{j=1}^{d} a_{j}x_{j} \in I$. Since $x_{i}x_{j} \in I$ for all $1 \leq i <j \leq d$, we get $a_{i}x_{i}^{2} \in I$, which means that $a_{i}=0$ for all $i$. Hence $h=0$, which is a contradiction.
\end{proof}

\begin{Example}{\rm Let $\AA$ be a $3\times 3$ matrix with linear entries in $\RR = \mathbb{R}[x_1,x_2,x_3,x_4]$.
If the ideal $I = I_2(\AA)$ has codimension $4$ then after change of variables it can be generated by
$6$ monomials and $3$ binomials.
}\end{Example} 

\medskip

\begin{Corollary}
Let $\RR=\mathbb{R}[x_1, \ldots, x_d]$ with $d \geq 3$ and $\m=(x_{1}, \ldots, x_{d})$. 
Let $I$ be a Gorenstein $\RR$--ideal of codimension $d$ minimally generated by $\nu(\m^{2})-1$ quadrics.
Then
\begin{itemize}
\item[{\rm (i)}] $I^2 = \m^4$.
\item[{\rm (ii)}] $\RR[It]$ has the condition $R_1$ and $\depth \gr_{I}(R) = 0$. 
\item[{\rm (iii)}] $\e_{1}(I)=(d-1)2^{d-2}$.
\item[{\rm (iv)}] If $d=3$, then $\red(I)=2$. If $d \geq 4$, then $d=2 \cdot \red(I)$ or $d= 2 \cdot \red(I)+1$.
\item[{\rm (v)}] If $d \geq 4$, then $I$ is not syzygetic. 
\end{itemize}
\end{Corollary}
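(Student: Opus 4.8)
The plan is to reduce to the explicit normal form supplied by Theorem~\ref{abc} and then let part (i) drive everything else. After a linear change of variables I may take
\[ I=(x_ix_j,\ x_1^2\pm x_k^2\mid 1\le i<j\le d,\ k=2,\ldots,d),\]
and I write $q_k=x_1^2\pm x_k^2$ for the binomial generators. As recorded before Theorem~\ref{abc}, $\RR/I$ has Hilbert function $(1,d,1)$ with $\m^2/I\simeq K$ generated by the class of $x_1^2$, so $\m^2=I+Kx_1^2$ and $\m^3=\m I$. From the last equality I get $\m^4=\m\cdot\m^3=\m^2I=I^2+x_1^2I$, so proving (i) amounts to showing $x_1^2I\subseteq I^2$. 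For a generator $x_ix_j$ with $2\le i<j$ this is immediate from $x_1^2x_ix_j=(x_1x_i)(x_1x_j)$, and the generators $x_1x_j$ and $q_k$ reduce into $I^2$ via the relations $x_1^2=q_k\mp x_k^2$; the one genuine input is $x_1^4\in I^2$. For that I expand the product of two distinct binomials $q_iq_j$ (which is where $d\ge3$ is used): every cross term has the shape $(x_ax_b)^2$ with $x_ax_b\in I$, so $q_iq_j\equiv x_1^4\pmod{I^2}$ and hence $x_1^4\in I^2$. This gives $\m^4\subseteq I^2$, i.e. $I^2=\m^4$.

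From $I^2=\m^4=\m^2I$ an immediate induction yields $I^n=\m^{2n}$ for all $n\ge2$, and this single fact powers (ii) and (iii). Part (ii) is then verbatim the argument of Proposition~\ref{Prop2.1}: the inclusion $\RR[It]\hookrightarrow\RR[\m^2t]$ has finite-length cokernel and $\RR[\m^2t]$ is normal (as $\m^2$ is a normal ideal), so $\RR[It]$ satisfies $R_1$, and $\depth\gr_I(\RR)=0$ follows by \cite[Proposition 1.1]{Hu0}. For (iii), the equality $I^n=\m^{2n}$ for $n\gg0$ forces $I$ and $\m^2$ to share a Hilbert polynomial, so $\e_1(I)=\e_1(\m^2)$; I then expand the Hilbert--Samuel function $\lambda(\RR/\m^{2n+2})=\binom{2n+d+1}{d}=\tfrac1{d!}\prod_{j=2}^{d+1}(2n+j)$ and compare it with $\e_0\binom{n+d}{d}-\e_1\binom{n+d-1}{d-1}+\cdots$. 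Using only $\e_0(\m^2)=2^d$ and the next coefficient of the product, I read off $\e_1(\m^2)=2^{d-1}(d+1)-2^{d-2}(d+3)=(d-1)2^{d-2}$.

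For (iv) I fix a minimal reduction $J$ of $I$; since $\overline I=\m^2$, $J$ is also a minimal reduction of $\m^2$. Because $I^n=\m^{2n}$ for $n\ge2$, for every $r\ge2$ the identity $I^{r+1}=JI^r$ is literally $(\m^2)^{r+1}=J(\m^2)^r$, so $\red_J(I)=\max\{2,\red_J(\m^2)\}$ once one knows $\red_J(I)\ge2$. The inequality $\red_J(I)\ge2$ holds because $\red(I)=1$ would make $\gr_I(\RR)$ Cohen--Macaulay, contradicting $\depth\gr_I(\RR)=0$ from (ii). Since $\gr_{\m^2}(\RR)$ is Cohen--Macaulay, $\red_J(\m^2)$ is independent of the minimal reduction and so equals $\red(\m^2)$, and a one-line monomial count on $(x_1^2,\ldots,x_d^2)$---a degree-$(2r+2)$ monomial lands in the reduction times $\m^{2r}$ as soon as some exponent is $\ge2$, which is automatic once $2r+2>d$---gives $\red(\m^2)=\lfloor d/2\rfloor$. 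Hence $\red(I)=\max\{2,\lfloor d/2\rfloor\}$, which is $2$ for $d=3$ and $\lfloor d/2\rfloor$ for $d\ge4$; the latter is exactly $d=2\red(I)$ when $d$ is even and $d=2\red(I)+1$ when $d$ is odd.

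Finally, (v) is a generator count: by (i) the ideal $I^2=\m^4$ is minimally generated by the $\binom{d+3}{4}$ quartic monomials, so $\nu(I^2)=\binom{d+3}{4}$, whereas syzygeticity would force $\nu(I^2)=\binom{\nu(I)+1}{2}$ with $\nu(I)=\binom{d+1}{2}-1$. These integers coincide at $d=3$ (both equal $15$, consistent with $I$ being syzygetic there) but satisfy $\binom{d+3}{4}<\binom{\nu(I)+1}{2}$ for all $d\ge4$, the right side growing like $d^4/8$ against $d^4/24$ on the left, so $I$ cannot be syzygetic. The step I expect to be the real obstacle is (iv): it needs three inputs assembled correctly---$\red(I)\ge2$ from the depth, the Cohen--Macaulayness of $\gr_{\m^2}(\RR)$ to remove the dependence on the reduction, and the value $\red(\m^2)=\lfloor d/2\rfloor$---together with careful tracking of the parity of $d$; by contrast the quartic bookkeeping in (i) is routine once $x_1^4\in I^2$ is secured.
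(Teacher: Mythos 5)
Your proposal is correct and follows essentially the same route as the paper's own proof: the same quartic identity $q_iq_j\equiv x_1^4 \pmod{I^2}$ for (i), the same appeal to the argument of Proposition~\ref{Prop2.1} for (ii), the same Hilbert-polynomial coefficient comparison for (iii), the same transfer of reduction relations $I^{n+1}=JI^n \Leftrightarrow \m^{2n+2}=J\m^{2n}$ ($n\geq 2$) for (iv), and the same generator count $\nu(I^2)=\binom{d+3}{4}$ versus $\binom{\nu(I)+1}{2}$ for (v). The only difference is that in (iv) you supply details the paper leaves implicit --- the justification $\red(I)\geq 2$ via $\depth \gr_I(\RR)=0$, the independence of $\red(\m^2)$ from the choice of minimal reduction via Cohen--Macaulayness of $\gr_{\m^2}(\RR)$, and the monomial count giving $\red(\m^2)=\lfloor d/2\rfloor$ --- which is welcome gap-filling rather than a different method.
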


\begin{proof}(i) It is enough to prove that $x_{i}^{4} \in I^{2}$ for all $i=1, \ldots, d$. This follows from
\[ x_{i}^{4} = (x_{i}^{2} \pm x_{j}^{2})(x_{i}^{2} \pm x_{k}^{2}) \pm (x_{i}x_{k})^{2} \pm (x_{i}x_{j})^{2} \pm (x_{j}x_{k})^{2} \in I^{2},\]
where $i, j, k$ are all distinct.

\medskip

\noindent (ii) It follows from the similar argument given in the proof of Proposition~\ref{Prop2.1}.

\medskip

\noindent (iii) Since $\m^{3}=\m I$ and $\m^{4}=I^{2}$, we have $I^{n} = \m^{2n}$ for all $n \geq 2$. In particular, we obtain an equality of respective Hilbert polynomials:
\[ \e_{0}(I) {{n+d}\choose{d}} - \e_{1}(I) {{n+d-1}\choose{d-1}} + \cdots = { {2n+1+d}\choose{d}}.\]
By comparing the coefficients of $n^{d-1}$ of both sides, we get
\[ \frac{\e_{0}(I) d(d+1)}{2 d !} - \frac{ \e_{1}(I)}{(d-1)!} = \frac{ (d^{2}+3d)2^{d-2}}{d!}. \]
Now we use $\e_{0}(I)=2^{d}$ and solve this equation for $\e_{1}(I)$, which proves the assertion.

\medskip

\noindent (iv) Since $I^{n} = \m^{2n}$ for all $n \geq 2$, for all $n \geq 2$, we get
\[  \m^{2(n+1)} = I^{n+1}= J I^{n} = J \m^{2n}.\] 
Since both reduction numbers of $I$ and $\m^{2}$ are greater than $1$, 
it implies that  the reduction numbers of $I$ and $\m^{2}$ are equal.

\medskip

\noindent (v)
Suppose that $I$ is sygygetic. Then by
\[ {{d+3}\choose{4}} = \nu(\m^{4})= \nu(I^{2}) = {{\nu(I)+1}\choose{2}} = \frac{ (d+2)(d+1)d(d-1)}{8}.\]
By solving this for $d$, we get $d=3$.
\end{proof}

\begin{Corollary}\label{regGordimd} Let $I$ be a Gorenstein ideal as above. Then for $d\geq 4$, 
$\reg(\RR[It]) = \red(\m^2)$, in particular the relation type of
$\RR[It]$ is at most $\red(\m^2) + 1$.
\end{Corollary}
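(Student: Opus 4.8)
The plan is to repeat, in dimension $d$, the local cohomology computation carried out for the syzygetic case of Corollary~\ref{regdim3}; the only new ingredient is the value of $\reg(\BB)$ below. Since $\reg(\RR[It]) = \reg(\gr_I(\RR))$ (\cite[Proposition 4.1]{JU96}, \cite[Lemma 4.8]{Ooishi82}), I set $\AA = \gr_I(\RR)$ and $\BB = \gr_{\m^2}(\RR)$. From the preceding Corollary the Gorenstein ideal $I$ satisfies $\m^2/I \simeq K$, $I^2 = \m^4$, and $I^n = \m^{2n}$ for all $n \geq 2$. Exactly as in Corollary~\ref{regdim3}, the natural map $\AA \rar \BB$ induced by $I^n \subset \m^{2n}$ is surjective with kernel $\m^2/I \simeq K$ in degree $0$, injective with cokernel $\m^2/I \simeq K$ in degree $1$, and an isomorphism in each degree $n \geq 2$; this produces the four-term exact sequence of graded $\AA$-modules
\[ 0 \rar K \lar \AA \lar \BB \lar K[-1] \rar 0, \]
with $K$ and $K[-1]$ concentrated in degrees $0$ and $1$ respectively.

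I would then split this into the two short exact sequences
\[ 0 \rar K \lar \AA \lar L \rar 0, \qquad 0 \rar L \lar \BB \lar K[-1] \rar 0, \]
and apply $\H^i(\cdot) = \H^i_{\AA_+}(\cdot)$. The finite-length modules $K$ and $K[-1]$ are $\AA_+$-torsion, so only their degree-zero local cohomology survives, while $\BB$ is Cohen--Macaulay of dimension $d$, whence $\H^i(\BB) = 0$ for $i < d$. Chasing the two long exact sequences gives
\[ \H^0(\AA) = K, \quad \H^1(\AA) = K[-1], \quad \H^i(\AA) = 0 \ (2 \leq i \leq d-1), \quad \H^d(\AA) = \H^d(\BB). \]
Reading the top degrees $a_i(\cdot)$ off these modules in $\reg(\AA) = \max\{a_i(\AA) + i \mid 0 \leq i \leq d\}$ then yields
\[ \reg(\AA) = \max\{0,\, 2,\, a_d(\BB)+d\} = \max\{0,\, 2,\, \reg(\BB)\}. \]

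Finally, since $\BB$ is Cohen--Macaulay, $\reg(\BB) = a_d(\BB) + d = \red(\m^2)$, just as in Corollary~\ref{regdim3}. Here the hypothesis $d \geq 4$ is decisive: by part (iv) of the preceding Corollary, $\red(\m^2) = \lfloor d/2 \rfloor \geq 2$, so the term $\reg(\BB)$ dominates the contribution $2$ arising from $\H^1(\AA)$, and hence $\reg(\RR[It]) = \reg(\AA) = \red(\m^2)$. (When $d = 3$ one has $\red(\m^2) = 1$, the $\H^1$-term wins, and the regularity is $2$ instead, which is precisely Corollary~\ref{regdim3}.) The relation type statement then follows from the general bound that the relation type of $\RR[It]$ does not exceed $\reg(\RR[It]) + 1$. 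The only real obstacle is the cohomological bookkeeping in the two long exact sequences and, conceptually, confirming that $\red(\m^2) \geq 2$ so that the equality $\reg(\AA) = \red(\m^2)$ holds on the nose rather than merely as an inequality.
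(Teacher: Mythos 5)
Your proposal is correct and is essentially the paper's own proof: the paper's argument for this corollary is precisely ``the same calculation as Corollary~\ref{regdim3}, except that now we use that $\red(\m^2)\geq 2$,'' which is exactly the local cohomology computation you carry out, with the sequence $0 \rar K \rar \AA \rar \BB \rar K[-1] \rar 0$ valid here because $\m^2/I\simeq K$, $\m^3=\m I$ and $I^n=\m^{2n}$ for $n\geq 2$. Your additional observation that $\red(\m^2)=\lfloor d/2\rfloor\geq 2$ for $d\geq 4$ (from part (iv) of the preceding corollary) correctly pins down why the term $\reg(\BB)$ dominates the contribution $2$ from $\H^1(\AA)$, which is the one point where the paper is terse.
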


\begin{proof} It is the same calculation as Corollary~\ref{regdim3}, except that now we use that
$\red(\m^2)\geq 2$.
\end{proof}

As in Remark~\ref{remregdim3}, we also have the depth of $\mathcal{F}(I)$ is $1$.

\subsubsection*{Submaximally  generated ideals of quadrics}

Let $\RR = K[x_1, \ldots, x_d]$, $\m=(x_{1}, \ldots, x_{d})$,  and let $I$ be an ideal of codimension $d$ minimally generated by $\nu(\m^{2})-1$ quadrics.   It is obvious that such ideals can be generated by
the same number of binomials. [Just pick a monomial $A\notin I$ and use that $I + (A)= \m^2$.] What 
we would like is a statement in the spirit of Theorem~\ref{abc}, 
\[ I = 
\mbox{\rm (monomials) $+$ ($r$ binomials), $r< d$}.\]

Making use of Theorem~\ref{abc}
we can now state a structure theorem for almost maximally generated ideals generated by quadrics.

\begin{Theorem} \label{abc2} Let $\RR = \mathbb{R}[x_1,\ldots, x_d]$, $\m=(x_{1}, \ldots, x_{d})$ and $I$ an ideal of
codimension $d$ minimally generated by $\nu(\m^{2})-1$ quadrics, let $r+1$ be the Cohen--Macaulay type of $I$.
Then up to a change of variables
\[ I  = (x_1, \ldots, x_r)\m + I_{d-r},\]
where $I_{d-r} $ is a Gorenstein ideal of $\mathbb{R}[x_{r+1}, \ldots, x_d]$. With additional  
  changes  $I$ can 
be written as
\begin{eqnarray} \label{abc2f} I = K_{r,d} + K_{d-r} + (x_1^2, \ldots, x_r^2) + (x_{r+1}^2\pm x_{r+2}^2, \ldots, x_{r+1}^2\pm x_d^2),
\end{eqnarray}
where $K_{r,d} + (x_1^2, \ldots, x_r^2) = (x_1,\ldots, x_r)(x_1, \ldots, x_d)$ and $K_{d-r}$ is the edge ideal
on the complete graph of vertex set $\{x_{r+1}, \ldots, x_d\}$. In particular
\[ I = (\mbox{\rm monomials}) + (d-r-1 \ \mbox{\rm binomials}).\]
Conversely, any ideal as in {\rm $($\ref{abc2f}$)$}, over any field $K$, has Cohen-Macaulay type $r+1$.
\end{Theorem}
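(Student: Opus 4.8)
The plan is to prove the structural decomposition first, then read off the normal form (\ref{abc2f}) from Theorem~\ref{abc}, and finally treat the converse. Write $\AA = \RR/I$ and $V = \AA_1$. Since a minimal generating set of $I$ consists of quadrics, graded Nakayama gives $\dim_K I_2 = \nu(\m^2)-1 = \binom{d+1}{2}-1$, so $\dim_K \AA_2 = 1$. First I would record that the Hilbert function of $\AA$ is $(1,d,1)$: with $h_2=1$ Macaulay's growth bound forces $h_3 \le 1$, and if $h_3=1$ this is maximal growth, so by Gotzmann persistence $h_n = 1$ for all $n\ge 2$, contradicting that $\AA$ is Artinian (alternatively, the socle of the Gorenstein quotient $\RR/J$ by a minimal reduction $J$ maps to $0$ in $\AA$, killing the tail). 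Consequently the socle of $\AA$ sits in degrees $1$ and $2$, the component $\AA_2$ lies entirely in the socle, and the Cohen--Macaulay type of $I$ equals $\dim_K S_1 + 1$, where $S_1 = (0:_\AA \m)_1$ is the linear socle. Hence the hypothesis type $=r+1$ says exactly $\dim_K S_1 = r$.

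Next I would choose coordinates adapted to $S_1$: pick a basis $x_1,\ldots,x_r$ of $S_1$ and extend it to a basis $x_1,\ldots,x_d$ of $V$. As each $x_i$ ($i\le r$) lies in the socle, $x_i\m \subseteq I$, so $(x_1,\ldots,x_r)\m \subseteq I$. Splitting each quadric into its component in $(x_1,\ldots,x_r)\m$ plus a quadric purely in $x_{r+1},\ldots,x_d$, and using $\dim_K I_2 = \binom{d+1}{2}-1$, I obtain a subspace $W \subseteq \bar\m^2$ of dimension $\binom{d-r+1}{2}-1$ (with $\bar\RR = \mathbb{R}[x_{r+1},\ldots,x_d]$, $\bar\m=(x_{r+1},\ldots,x_d)$) such that $I = (x_1,\ldots,x_r)\m + I_{d-r}$, where $I_{d-r} := W\bar\RR$; since both ideals are generated in degree $2$ and agree there, they coincide. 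This decomposition exhibits $\AA$ as the trivial extension (idealization) of $\bar\AA = \bar\RR/I_{d-r}$ by a copy of $\mathbb{R}^r$ placed in degree $1$, so $\bar\AA$ is Artinian, $I_{d-r}$ is $\bar\m$-primary of codimension $d-r$, and it is minimally generated by $\nu(\bar\m^2)-1$ quadrics; in particular $I_{d-r}\ne 0$, forcing $d-r \ge 2$. A socle computation in the trivial extension gives $\operatorname{soc}(\AA) = \operatorname{soc}(\bar\AA)\oplus \mathbb{R}^r$, and since $x_1,\ldots,x_r$ already exhaust the linear socle of $\AA$, the algebra $\bar\AA$ has no linear socle; as its Hilbert function is $(1,d-r,1)$, its socle is the one-dimensional $\bar\AA_2$, so $\bar\AA$ is Gorenstein. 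This is the crux.

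With $I_{d-r}$ identified as a submaximally generated Gorenstein ideal in the $d-r$ variables $x_{r+1},\ldots,x_d$ over $\mathbb{R}$, Theorem~\ref{abc} (applied when $d-r\ge 3$; the case $d-r=2$ is the spectral-diagonalization argument of that proof in two variables, where a codimension-$2$ Gorenstein ideal is a complete intersection of two quadrics) puts $I_{d-r}$, after a change of variables in $x_{r+1},\ldots,x_d$, into the form $K_{d-r} + (x_{r+1}^2\pm x_{r+2}^2,\ldots,x_{r+1}^2\pm x_d^2)$. Combined with the identity $(x_1,\ldots,x_r)\m = K_{r,d} + (x_1^2,\ldots,x_r^2)$, this yields precisely (\ref{abc2f}), and counting the binomials contributed by $I_{d-r}$ gives $(d-r)-1 = d-r-1$ of them, all remaining generators being monomials.

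For the converse, valid over an arbitrary field $K$, I would reverse the bookkeeping. Starting from $I$ as in (\ref{abc2f}), the summand $I_{d-r} = K_{d-r} + (x_{r+1}^2\pm x_k^2)$ is, by the converse half of Theorem~\ref{abc} (which holds for all $K$), a Gorenstein ideal of $K[x_{r+1},\ldots,x_d]$ of codimension $d-r$ and type $1$. Since $I = (x_1,\ldots,x_r)\m + I_{d-r}$, the same trivial-extension description shows $\RR/I \cong (\bar\RR/I_{d-r})\ltimes (K^r)(-1)$, whence $\operatorname{soc}(\RR/I) = \operatorname{soc}(\bar\RR/I_{d-r}) \oplus K^r$ and the Cohen--Macaulay type of $I$ is $1+r = r+1$. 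The main obstacle throughout is the second paragraph: producing the clean splitting $I = (x_1,\ldots,x_r)\m + I_{d-r}$ and verifying through the trivial-extension socle computation that $I_{d-r}$ is genuinely Gorenstein; once this is in place, Theorem~\ref{abc} supplies the explicit normal form and the binomial count with no further work.
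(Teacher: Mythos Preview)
Your argument is correct and follows the same skeleton as the paper's proof: isolate the linear socle $S_1$, change coordinates so that $x_1,\ldots,x_r$ span it, split off $(x_1,\ldots,x_r)\m$, verify that the residual ideal $I_{d-r}$ in $\bar\RR$ is Gorenstein, and invoke Theorem~\ref{abc}.

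The differences are largely in packaging. You establish the Hilbert function $(1,d,1)$ at the outset via Gotzmann persistence (valid since $I$ is generated in degrees $\le 3$, so maximal growth $h_2=1\to h_3=1$ would persist), whereas the paper initially allows a tail $(1,d,1,\ldots,1)$ and only sees it collapse once $I'$ is shown Gorenstein. Your identification of $\AA$ with the trivial extension $\bar\AA\ltimes K^r(-1)$ gives the socle computation $\mathrm{soc}(\AA)=\mathrm{soc}(\bar\AA)\oplus K^r$ in one line, both for the forward direction and the converse; the paper reaches the same conclusions via the direct lifting argument (a linear socle element of $\bar\AA$ would lift to one of $\AA$) and, for the converse, via induction along the short exact sequences $0\to(x_1)/x_1\m\to\RR/I\to\RR'/I'\to 0$. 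Finally, you flag the boundary case $d-r=2$ explicitly (Theorem~\ref{abc} is stated only for $d\ge 3$) and note that the bilinear-form diagonalization in its proof still applies; the paper does not comment on this. None of these changes the substance of the argument.
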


\begin{proof}
We give first some observations about the Hilbert function of $\AA=\RR/I$.
If $I$ a Gorenstein ideal of codimension $d$,  the Hilbert function of $\AA$ is simply $(1, d, 1)$.

\medskip

 In general the Hilbert function of $\AA$ has the form $(1, d, 1, \ldots)$ with a string
of $s$ $1$'s of length $1\leq s\leq d-1$
  because once a 
component has dimension $1$ those that follow also have dimension $1$ or $0$. If $\AA$ is not Gorenstein and $s>2$,
 its
socle has a generator of degree $s$, but no generator of degrees  $[2, s-1]$, since for instance $\AA_s = \AA_1\cdot 
\AA_{s-1}\neq 0$. This implies that besides the generator in degree $s$ the other generators are in degree $1$.
In particular this means that there is a linear form $h$ such that $h\cdot \m\subset I$.

\medskip 
 
 We could actually do better:  Let $S$ be the socle of $\AA$ and $S_1 = S\cap \AA_1$. Then $S_1$
is a vector space of dimension $r$, where $ r= \mbox{\rm type}(I)-1$.  Assume that $r\neq 0$, that is $I$ is 
not Gorenstein. 

\medskip

 Let $\{x_1, \ldots, x_r\}$ (after relabeling) a set of linear forms that generate $S_1$. Denote
$\RR' = \RR/(x_1, \ldots, x_r)$, let $I'$ be the image of $I$ in $\RR'$. Then
 \[ I = (x_1, \ldots, x_r)\m + I'\RR,\]
 and 
\[ I\cap (x_1, \ldots, x_r)\RR = (x_1, \ldots, x_r)\m, \] which means that we have embeddings
\[ I/(x_1, \ldots, x_r)\m \hookrightarrow \RR', \quad \AA/(S_1) \simeq \RR'/I' = \AA'.\]

\medskip

 The ideal $I'$ is a submaximal ideal of $\RR'$ 
 and the Hilbert function of $\AA'$ has the
form $(1, d-r, 1, \ldots)$. Furthermore the strands $(\ldots)$ of the Hilbert functions of $\AA$ and $\AA'$
are identical. Morevoer $I'$ is a Gorenstein ideal: Otherwise we would have 
a linear form $g\in \RR'$ with $g \m'\subset I'$, and since we already have $g\cdot (x_1,\ldots, x_r)\subset I$,
we would have $g\m \subset I$, which is a contradiction to the definition of $S_1$.  Note that in particular
the strand $(\ldots)$ consists of $0$'s.

\medskip

Finally the monomials are assembled  and we apply Theorem~\ref{abc} to $I'$. 
For the final assertion, we note that $(x_1, \ldots, x_r)$ lies in the socle of  $\AA$ and thus $\mbox{\rm type}(I) \geq r+1$. For the converse one uses induction on
sequences of the kind $0\rar I/x_1\m \rar \RR/(x_1)=\RR' \rar \RR'/I' \rar 0$
and $0\rar (x_1)/x_1\m \rar \RR/I \rar \RR'/I' \rar 0$
 to calculate types.  
\end{proof}

A more abstract, but less detailed structural description of Gorenstein ideals that holds for all fields is
the following polarized version of Theorem~\ref{abc}.

\begin{Theorem} \label{abc3} Let $\RR =K[x_1,\ldots, x_d]$ and $I$ a Gorenstein ideal of codimension $d$
 minimally generated by $\nu(\m^{2})-1$ 
 quadrics. Then there is a set of linearly independent forms $\{y_1, \ldots,y_d\}$ such that
\[ I = (x_iy_j, 1\leq i,j \leq d, i\neq j, x_1y_1-x_2y_2, \ldots, x_1y_1-x_dy_d).\]
Conversely, any ideal of this form is Gorenstein.
\end{Theorem}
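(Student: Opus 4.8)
The plan is to replace the orthogonal diagonalization used in Theorem~\ref{abc}, which is unavailable over a general field, by a dual (polar) basis construction, so that neither square roots nor eigenvalues are needed and no restriction on the characteristic appears.

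First I would record the degree-two structure. Set $\AA=\RR/I$ and $V=\AA_1$. As already observed for Gorenstein submaximally generated ideals, the Hilbert function of $\AA$ is $(1,d,1)$, so $I$ is generated by its degree-two component $I_2$, and $\AA_2\cong K$; fix a generator $\bar h$ of $\AA_2$. Multiplication in $\AA$ then gives a symmetric bilinear pairing $C\colon V\times V\to \AA_2\cong K$, namely $uv=C(u,v)\bar h$, and $I_2$ is exactly the kernel of the induced functional $\Sym^2 V\to K$. Because $\AA$ is Gorenstein its socle sits in degree $2$, so no nonzero linear form is annihilated by all of $V$; that is, $C$ is nondegenerate. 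Let $\BB$ be the (symmetric, invertible) Gram matrix of $C$ in the basis $\{x_1,\ldots,x_d\}$.

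Next I would define the polar basis $y_j=\sum_k(\BB^{-1})_{kj}\,x_k$; since $\BB^{-1}$ is invertible these forms are linearly independent, and a direct check gives $C(x_i,y_j)=\delta_{ij}$. Consequently $x_iy_j\in I$ for $i\neq j$ and $x_1y_1-x_ky_k\in I$, so the listed ideal is contained in $I$. The substantive point, and the step I expect to be the main obstacle, is that these (redundant) elements already span $I_2=\ker C$. Here I would use that $\{x_iy_j\}_{1\le i,j\le d}$ spans $\Sym^2 V$ (because $\{y_j\}$ is a basis), that on this spanning set the functional reads off $C(x_iy_j)=\delta_{ij}$, and that a combination $\sum c_{ij}x_iy_j$ lies in $\ker C$ iff $\sum_i c_{ii}=0$; splitting off the diagonal then rewrites any kernel element in terms of the $x_iy_j$ with $i\neq j$ and the differences $x_iy_i-x_1y_1$. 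Since $I$ is generated in degree two, this yields $I=(x_iy_j,\ i\neq j;\ x_1y_1-x_ky_k)$. One must be careful that $x_iy_j\neq x_jy_i$ in $\RR$, which is precisely why the full set of ordered pairs $i\neq j$ is needed.

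For the converse I would run the same construction backwards. Given the $y_j$ as a polar basis, that is with the change-of-basis matrix $T$ (where $y_j=\sum_k T_{kj}x_k$) symmetric, equivalently $C(x_i,y_j)=\delta_{ij}$ for a symmetric nondegenerate form $C$, the same functional argument shows the generators span $\ker C$, so $\dim_K\AA_2=1$. A short computation then gives $\m^3\subseteq I$: for each $a$ pick $c\neq a$, so that $x_a\cdot x_cy_c=x_c\,(x_ay_c)\in I$ while $x_cy_c$ represents the generator $\bar h$ of $\AA_2$, whence $x_a\,\AA_2=0$ for all $a$ and $\AA_3=0$. Thus $\AA$ has Hilbert function $(1,d,1)$, and its socle is forced into degree $2$ because nondegeneracy of $C$ kills any degree-one socle; hence the socle is the one-dimensional space $\AA_2$ and $\AA$ is Gorenstein. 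I would flag that this direction genuinely uses the symmetry of $T$ (without it $\dim_K\AA_2$ drops to $0$ and the quotient is $\m^2$, not Gorenstein), a symmetry that is automatic for the $y_j$ produced in the first part.
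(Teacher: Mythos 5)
Your construction is essentially the paper's: both proofs define the symmetric pairing $C(u,v)$ via multiplication $\AA_1\times\AA_1\to\AA_2\cong K$, use Gorensteinness to get nondegeneracy, take the basis $\{y_j\}$ dual to $\{x_i\}$ with respect to $C$, and conclude that the displayed quadrics generate $I$. The write-ups diverge only in how the forward direction is closed: you verify explicitly that the elements $x_iy_j$ ($i\neq j$) and $x_1y_1-x_ky_k$ span $I_2=\ker\left(\Sym^2 V\to\AA_2\right)$, whereas the paper argues that the ideal $J$ they generate satisfies $J\subseteq I$ and $J+(x_1y_1)=\m^2$, so that $J$ is submaximally generated and hence $J=I$. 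Both arguments are correct.

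Where you genuinely add something is the converse. The paper dismisses it with ``the converse uses the same argument as the proof of Theorem~\ref{abc},'' but read literally---for an arbitrary set of linearly independent forms $\{y_j\}$---the converse is false, exactly as you suspected: if the matrix $T$ with $y_j=\sum_k T_{kj}x_k$ is not symmetric, then no nonzero functional on $\Sym^2 V$ vanishes on all the listed quadrics (such a functional is a symmetric bilinear form $B$ with $B(x_i,y_j)=c\,\delta_{ij}$, i.e., $BT=cI$ with $c\neq 0$, forcing $T^{-1}$ and hence $T$ symmetric), so the listed quadrics span $\m^2/\m^3$ and the ideal equals $\m^2$, whose type is $d$. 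Concretely, in $K[x_1,x_2,x_3]$ with $y_1=x_1+x_2$, $y_2=x_2$, $y_3=x_3$, the displayed ideal is all of $\m^2$. Your restriction to symmetric $T$---which is automatic for the dual basis produced in the forward direction---is therefore not a pedantic extra hypothesis but the correct reading of ``of this form,'' and under it your argument ($\dim_K\AA_2=1$; $\m^3\subseteq I$ via $x_a\cdot x_cy_c=x_c(x_ay_c)\in I$; no linear socle by nondegeneracy) is a complete proof, making explicit what the paper's appeal to Theorem~\ref{abc} must implicitly assume, namely $x_1y_1\notin I$.
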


\begin{proof}
We use the notation of the proof of Theorem~\ref{abc}. Consider the bilinear form
\[ (u,v) \mapsto uv= C(u,v)h, \quad C(u,v) \in K.\]
$C(\cdot, \cdot)$ is a non degenerate bilinear form  since $\AA$ is a Gorenstein algebra. Fix the basis
$\{x_1, \ldots, x_d\}$ and pick its dual basis 
of linear forms $\{y_1, \ldots, y_d\}$
with respect to $C$. This means that $C(x_i,y_j) = \delta_{i,j}$, so that we have
\[ J= (x_iy_j, 1\leq i,j \leq d, i\neq j, x_1y_1-x_2y_2, \ldots, x_1y_1-x_dy_d)\subset I. \]
Note that $J $ has codimension $d$ and \[J + (x_1y_1) = (x_1,\ldots, x_d)(y_1, \ldots, y_d) = \m^2.\]
In other words $J $ is submaximally generated and thus $J=I$. 

\medskip 
The converse uses the same argument as the proof of Theorem~\ref{abc}. 
\end{proof}

\begin{Corollary} \label{abc3Cor}
If I is a Gorenstein ideal of $K[x_1, \ldots, x_d]$ of codimension $d$ submaximally generated by
quadrics, then $I^2 = \m^4$. 
\end{Corollary}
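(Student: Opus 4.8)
The plan is to read everything off the polarized normal form of Theorem~\ref{abc3} together with the identity $\m^2 = I + (x_1y_1)$ recorded in its proof. Throughout I write $z = x_1y_1$ and use the two families of relations defining $I$: the products $x_iy_j \in I$ for $i \neq j$, and the differences $x_iy_i - x_jy_j \in I$ for all $i,j$ (the latter because $x_iy_i - x_jy_j = (x_1y_1 - x_jy_j) - (x_1y_1 - x_iy_i)$). The inclusion $I^2 \subseteq \m^4$ is immediate from $I \subseteq \m^2$, so the entire content is to prove $\m^4 \subseteq I^2$.

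First I would reduce to two local claims. Squaring $\m^2 = I + (z)$ gives
\[ \m^4 = (I + (z))^2 = I^2 + zI + z^2\RR, \]
so it suffices to show $z^2 \in I^2$ and $zI \subseteq I^2$. The engine for both is the single observation that for $i \neq j$,
\[ (x_iy_i)(x_jy_j) = (x_iy_j)(x_jy_i) \in I^2, \]
since both factors on the right lie in $I$; I will call this the \emph{switching identity}. Everything below amounts to rewriting a quartic as a combination of such products plus terms that manifestly lie in $I^2$.

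The crux is $z^2 = (x_1y_1)^2 \in I^2$, and this is where I expect the only genuine subtlety: since $z \notin I$ one cannot factor $z^2$ through $I$ directly, so the two relation families must be played against each other. The clean route is to write $x_1y_1 = x_2y_2 + a = x_3y_3 + b$ with $a = x_1y_1 - x_2y_2 \in I$ and $b = x_1y_1 - x_3y_3 \in I$, and expand
\[ z^2 = (x_2y_2 + a)(x_3y_3 + b) = (x_2y_2)(x_3y_3) + (x_2y_2)b + a(x_3y_3) + ab. \]
Here $ab \in I^2$ since $a,b \in I$; the term $(x_2y_2)(x_3y_3) \in I^2$ by the switching identity; and each mixed term, for instance $(x_2y_2)b = (x_2y_2)(x_1y_1) - (x_2y_2)(x_3y_3)$, is a difference of two products of \emph{distinct} diagonals and so lies in $I^2$ by the switching identity again. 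This forces $z^2 \in I^2$. Choosing three \emph{distinct} indices $1,2,3$ is exactly where the hypothesis $d \geq 3$ enters; it is not cosmetic, since for $d=2$ the statement is false (e.g. $I=(x^2,y^2)$ has $x^3y \notin I^2$).

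Finally, $zI \subseteq I^2$ follows by multiplying $z$ into each generator. For a generator $x_1y_1 - x_ky_k$ one gets $z(x_1y_1 - x_ky_k) = z^2 - (x_1y_1)(x_ky_k)$, which lies in $I^2$ by the previous step and the switching identity. For a generator $x_iy_j$ with $i\neq j$ one has $z\,x_iy_j = (x_1y_1)(x_iy_j)$; when neither index is $1$ this factors directly as $(x_1y_j)(x_iy_1) \in I^2$, and when one index equals $1$ a short case check---replacing the offending $x_1y_1$ by some $x_ky_k$ with $k \notin \{1,j\}$, available since $d \geq 3$, and absorbing the correction $x_1y_1 - x_ky_k \in I$---again lands in $I^2$. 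Feeding both claims into the displayed expansion of $\m^4$ yields $\m^4 \subseteq I^2$, hence $I^2 = \m^4$. This is precisely the all-fields analogue of part~(i) of the Corollary following Theorem~\ref{abc}, with the polarized generators $x_iy_j$ and $x_1y_1 - x_ky_k$ playing the role of the real normal form, and the passage $z^2 \in I^2$ is the step that carries the weight of the argument.
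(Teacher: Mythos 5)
Your proof is correct and is essentially the paper's own argument: both rest on the polarized normal form of Theorem~\ref{abc3}, the switching identity $(x_iy_i)(x_jy_j)=(x_iy_j)(x_jy_i)$, and the same two critical computations---indeed, expanding your $(x_2y_2+a)(x_3y_3+b)$ and your treatment of $z\,x_1y_j$ reproduces verbatim the paper's displayed identities for $x_1^2y_1^2$ and $x_1y_1x_1y_2$. The only difference is bookkeeping: the paper enumerates the monomial generators $x_iy_jx_ky_\ell$ of $\m^4$ and checks the critical ones (``and similar ones''), whereas you organize the same verifications through the decomposition $\m^4=(I+(z))^2=I^2+zI+(z^2)$, which makes explicit exactly which products remain to be checked.
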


\begin{proof} $\m^2 = (x_1, \ldots, x_d)(y_1, \ldots, y_d)$, so
\[ \m^4 = (x_iy_jx_ky_{\ell}, 1\leq i,j,k,\ell\leq d).\] Let us check the two types of critical monomials:
\[ x_1^2y_1^2 = (x_1y_1-x_2y_2)(x_1y_1-x_3y_3) + (x_1y_3)(x_3y_1) + (x_1y_2)(x_2y_1) - (x_2y_3)(x_3y_2)\in I^2,\]
\[ x_1y_1x_1y_2 = x_1y_2(x_1y_1 - x_3y_3) + x_1y_3x_3y_2,\] 
and similar ones.
\end{proof}

\end{document}